\documentclass[12pt]{amsart}
\usepackage{amsfonts, amssymb, latexsym, hyperref, xcolor, tikz,tikz-cd,transparent}
\usepackage{ytableau}

\setlength{\oddsidemargin}{0in}
\setlength{\evensidemargin}{0in}
\setlength{\marginparwidth}{0in}
\setlength{\marginparsep}{0in}
\setlength{\marginparpush}{0in}
\setlength{\topmargin}{0in}
\setlength{\headheight}{0pt}
\setlength{\headsep}{0pt}
\setlength{\footskip}{.3in}
\setlength{\textheight}{9.2in}
\setlength{\textwidth}{6.5in}
\setlength{\parskip}{4pt}

\newtheorem{theorem}{Theorem}[section]
\newtheorem{proposition}[theorem]{Proposition}
\newtheorem{lemma}[theorem]{Lemma}

\newtheorem{Fact}[theorem]{Fact}

\newtheorem{claim}[theorem]{Claim}
\newtheorem*{claim*}{Claim}
\newtheorem{corollary}[theorem]{Corollary}
\newtheorem{Main Conjecture}[theorem]{Main Conjecture}

\theoremstyle{definition}

\theoremstyle{remark}

\newtheorem{example}[theorem]{Example}

\theoremstyle{plain}


\newcommand{\cellsize}{18}
\newlength{\cellsz} \setlength{\cellsz}{\cellsize\unitlength}
\newsavebox{\cell}
\sbox{\cell}{\begin{picture}(\cellsize,\cellsize)
\put(0,0){\line(1,0){\cellsize}}
\put(0,0){\line(0,1){\cellsize}}
\put(\cellsize,0){\line(0,1){\cellsize}}
\put(0,\cellsize){\line(1,0){\cellsize}}
\end{picture}}
\newcommand\cellify[1]{\def\thearg{#1}\def\nothing{}%
\ifx\thearg\nothing
\vrule width0pt height\cellsz depth0pt\else
\hbox to 0pt{\usebox{\cell} \hss}\fi%
\vbox to \cellsz{
\vss
\hbox to \cellsz{\hss$#1$\hss}
\vss}}
\newcommand\tableau[1]{\vtop{\let\\\cr
\baselineskip -16000pt \lineskiplimit 16000pt \lineskip 0pt
\ialign{&\cellify{##}\cr#1\crcr}}}

\newcommand{\Newton}{\mathrm{Newton}}

\hyphenation{tab-leau tab-leaux ge-nome ge-nomes}


%
%
\newcommand{\excise}[1]{}


\begin{document}
\pagestyle{plain}
\title{Generalized Permutahedra and Schubert calculus}
\author{Avery St. Dizier}
\author{Alexander Yong}
\address{Dept.~of Mathematics, U.~Illinois at Urbana-Champaign, Urbana, IL 61801, USA} 
\email{stdizie2@illinois.edu, ayong@illinois.edu}
\date{October 27, 2020}

\begin{abstract}
We connect generalized permutahedra with Schubert calculus. Thereby, we give sufficient vanishing criteria for 
Schubert intersection numbers of the flag variety. 
Our argument utilizes recent developments in the study of Schubitopes, which 
are Newton polytopes of Schubert polynomials. 
The resulting tableau test executes in polynomial time. 
\end{abstract}

\maketitle
\section{Introduction}
\subsection{Background} Let $X={\sf Flags}({\mathbb C}^n)$ be the variety of complete flags of vector spaces 
\[F_{\bullet}: \langle 0\rangle \subset F_1\subset F_2\subset \cdots \subset F_i \subset \cdots \subset F_{n-1}\subset {\mathbb C}^n, \ \dim(F_i)=i.\] 
$X$ has a left-action of $GL_n$, and hence also by lower triangular invertible matrices $B_{-}$. The $B_-$-orbits
$X_w^{\circ}$ are indexed by permutations $w$ in the symmetric group $S_n$. Let $\leq$ denote Bruhat order. 
The \emph{Schubert varieties} are the
closures \[X_w=\coprod_{v\geq w} X_v^{\circ};\] 
this is codimension 
$\ell(w)=\#\{(i,j): 1\leq i<j\leq n, w(i)>w(j)\}$.
Thus, $X=X_{id}$ and $X_{w_0}$ is the \emph{Schubert point}, where $w_0=n \ n-1 \ n-2 \ \cdots 2 \ 1$.

The Poincar\'e duals $\sigma_w:=[X_w]$ form the \emph{Schubert basis} of $H^*(X)$, the cohomology ring
of~$X$. A \emph{Schubert problem} is 
$(w^{(1)}, w^{(2)},\ldots, w^{(k)})\in S_n^k$  with
$\sum_{i=1}^k \ell(w^{(i)})={n\choose 2}=\dim_{\mathbb C}(X)$.
The \emph{Schubert intersection number} is
\begin{align}\label{eqn:Oct22mmm}
C_{w^{(1)},w^{(2)},\ldots,w^{(k)}}:= & \text{\ multiplicity of $\sigma_{w_0}$ in\ }  \prod_{i=1}^k \sigma_{w^{(i)}}\in H^*(X)\\ 
= & \text{\ number of points in \ } \bigcap_{i=1}^k g_iX_{\sigma^{(i)}},\nonumber
\end{align}
where $(g_1,\ldots,g_k)$ are elements of a dense open subset ${\mathcal O}$ of $GL_n^k$ (whose existence is guaranteed
by Kleiman transversality). A textbook is \cite{Fulton}; expository papers include \cite{KL, Gillespie}.

Algorithms exist for computing these numbers; see, e.g., \cite{Billey:duke, Manivel, Knutson:recurrence} and the
references therein. It is the famous open problem of Schubert calculus to find a combinatorial counting rule that computes $C_{w^{(1)},w^{(2)},\ldots,w^{(k)}}$. Such a rule would generalize the classical \emph{Littlewood-Richardson rule}  
governing Schubert calculus of Grassmannians.

This paper explores a related, but not necessarily easier, open problem:
\begin{center}
Find an efficient algorithm to decide if
$C_{w^{(1)},w^{(2)},\ldots,w^{(k)}}=0$.
\end{center}

Known algorithms to compute $C_{w^{(1)},w^{(2)},\ldots,w^{(k)}}$ do not provide a solution (being inefficient).
In the Grassmannian setting, neither does the Littlewood-Richardson rule, \emph{per se}.
However,  the \emph{saturation theorem}
\cite{Knutson.Tao} permits a \emph{polynomial-time} algorithm in that case \cite{DeLoera, Mulmuley}, by way of linear programming results. For (generalized) flag varieties, criteria were found by A.~Knutson \cite{Knutson:descent} and K.~Purbhoo \cite{Purbhoo:root}; no efficiency guarantees were stated. 

\subsection{Vanishing criterion} Our main goal is to connect the theory of generalized permutahedra to Schubert calculus.
We give a sufficient test for $C_{w^{(1)},w^{(2)},\ldots,w^{(k)}}=0$ and prove 
it executes in polynomial-time. The starting point is a simple consideration about 
Schubert polynomials. However, it becomes
effective due to recent developments about Newton polytopes of
Schubert polynomials \cite{Fink.Meszaros.StDizier, MTY, ARY}, as instances of generalized permutahedra. 

The \emph{diagram} of $w\in S_n$, denoted $D(w)$, is the
subset of boxes of $[n]\times [n]$ given by
\[D(w):=\{(i,j):1\leq i,j\leq n, j<w(i), i<w^{-1}(j)\}.\]
Let 
${\sf code}(w)=(c_1(w),c_2(w),\ldots,c_n(w))$,
where $c_i$ counts boxes of $D(w)$ in row
$i$. Define 
\[D:=D(w^{(1)},\ldots,w^{(k)})\]
by concatenating $D(w^{(1)}),\ldots,D(w^{(k)})$, left to right.
Set ${\sf Tab}:={\sf Tab}_{w^{(1)},\ldots,w^{(k)}}$ to be the set of fillings of $D$ with nonnegative integers such that:
\begin{itemize}
\item[(a)] Each column is strictly increasing from top to bottom.
\item[(b)] Any label $\ell$ in row $r$ satisfies $\ell\leq r$.
\item[(c)] The number of $\ell$'s is $n-\ell$, for $1\leq \ell\leq n$.
\end{itemize}

The first version of our test is:
\begin{theorem}
\label{thm:main}
Let $(w^{(1)},\ldots,w^{(k)})$ be a Schubert problem. If ${\sf Tab}=\emptyset$ then $C_{w^{(1)},w^{(2)},\ldots,w^{(k)}}=0$.
There is an algorithm to  determine emptiness in $O({\sf poly}(n,k))$.
\end{theorem}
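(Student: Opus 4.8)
The plan is to pass from $C_{w^{(1)},\dots,w^{(k)}}$ to the monomial support of the product $F:=\mathfrak{S}_{w^{(1)}}\cdots\mathfrak{S}_{w^{(k)}}$ of Schubert polynomials, to route each monomial of a factor through Kohnert's rule so as to build a filling of the corresponding Rothe diagram, and to concatenate these into an element of ${\sf Tab}$; the complexity claim then follows from recognizing emptiness of ${\sf Tab}$ as a network-flow feasibility question. For the first step, expand $F=\sum_{v}c^{v}\,\mathfrak{S}_{v}$ in the Schubert basis. Since $\sum_i\ell(w^{(i)})=\binom{n}{2}$, $F$ is homogeneous of degree $\binom{n}{2}$, so only $v$ with $\ell(v)=\binom{n}{2}$ occur, and the standard agreement of polynomial Schubert structure constants with cohomological ones when lengths add — equivalently, applying the longest divided-difference operator $\partial_{w_0}$, which annihilates every $\mathfrak{S}_v$ of length $\binom{n}{2}$ other than $\mathfrak{S}_{w_0}=x_1^{n-1}x_2^{n-2}\cdots x_{n-1}$ and sends the latter to $1$ — gives $C_{w^{(1)},\dots,w^{(k)}}=c^{w_0}$. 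All Schubert polynomials, and all $c^{v}$, have nonnegative coefficients, so there is no cancellation: with $\delta:=(n-1,n-2,\dots,1,0)$, the coefficient of $x^{\delta}$ in $F$ is at least $c^{w_0}=C_{w^{(1)},\dots,w^{(k)}}$. Hence if $C_{w^{(1)},\dots,w^{(k)}}\neq 0$ then $x^{\delta}$ is an honest monomial of $F$, and — again by nonnegativity of the coefficients in each factor — $\delta=\sum_{i=1}^{k}\alpha^{(i)}$ with each $\alpha^{(i)}$ in the support of $\mathfrak{S}_{w^{(i)}}$.

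By Kohnert's rule, each monomial $x^{\alpha^{(i)}}$ of $\mathfrak{S}_{w^{(i)}}$ is the row-content vector of a diagram obtained from $D(w^{(i)})$ by Kohnert moves; since every such move slides a cell upward within its own column, in each column the sorted list of occupied rows only decreases coordinatewise. Thus, recording in the $j$-th cell from the top of each column of $D(w^{(i)})$ the $j$-th occupied row of that column in the resulting Kohnert diagram yields a filling of $D(w^{(i)})$ that is strictly increasing down columns, has each label at most the row of its cell, and has content $\alpha^{(i)}$. Concatenating these fillings over $i$ produces a filling of $D$ satisfying (a) and (b) with content $\sum_i\alpha^{(i)}=\delta$; as $\delta_\ell=n-\ell$, this is exactly (c), so ${\sf Tab}\neq\emptyset$. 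Contrapositively, ${\sf Tab}=\emptyset$ forces $C_{w^{(1)},\dots,w^{(k)}}=0$. In the language of the paper this is the chain $\delta\in\Newton(F)=\sum_i\Newton(\mathfrak{S}_{w^{(i)}})=\sum_i\mathcal{S}_{D(w^{(i)})}=\mathcal{S}_D$, using that $\Newton(\mathfrak{S}_w)$ is the Schubitope $\mathcal{S}_{D(w)}$.

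For the complexity statement, note first that (a) and (b) together amount to the weaker-looking requirement that labels be distinct within each column and that a label in row $r$ be at most $r$: sorting the labels of a column increasingly preserves these row bounds, because the $j$-th smallest of $t$ distinct values that are respectively at most $r_1<\cdots<r_t$ is at most $r_j$. Deciding ${\sf Tab}\neq\emptyset$ is then a maximum-flow feasibility problem. Build a network: a source sends one unit of capacity into each cell $(r,c)$ of $D$; from $(r,c)$ there is a capacity-$1$ edge to an auxiliary node $(c,\ell)$ for each $\ell\le r$; there is a capacity-$1$ edge from $(c,\ell)$ to a node for the label $\ell$ (so at most one cell of column $c$ can carry label $\ell$); and there is a capacity-$(n-\ell)$ edge from the label-$\ell$ node to the sink. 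A filling in ${\sf Tab}$ exists if and only if the maximum flow equals $|D|=\binom{n}{2}$, since then saturation forces exactly $n-\ell$ cells to receive label $\ell$. This network has $O(n^2k)$ vertices and $O(n^3k)$ edges, so the test runs in $O({\sf poly}(n,k))$ by any polynomial-time maximum-flow algorithm. Equivalently, ${\sf Tab}\neq\emptyset$ is membership of the lattice point $\delta$ in the generalized permutahedron $\mathcal{S}_D=\sum_i\mathcal{S}_{D(w^{(i)})}$, a Minkowski sum of matroid base polytopes; its defining submodular function is explicit and efficiently computable, so this membership is decidable by submodular-function minimization.

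The step I expect to be the main obstacle is the Kohnert-to-filling translation: confirming that ``$\ell\le r$'' is precisely the flag constraint matched by upward cell-moves, and keeping the bookkeeping consistent between the per-factor Kohnert/Schubitope picture and the single concatenated diagram $D$. The remaining ingredients — the identity $C_{w^{(1)},\dots,w^{(k)}}=c^{w_0}$, nonnegativity of coefficients, the equality $\Newton(\mathfrak{S}_w)=\mathcal{S}_{D(w)}$, and polynomial-time maximum flow — are standard and enter only by citation.
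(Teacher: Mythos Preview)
Your argument is correct, but it follows a genuinely different route from the paper's. The paper first identifies $C_{w^{(1)},\dots,w^{(k)}}$ with $[x^{\delta}]\prod_i\mathfrak{S}_{w^{(i)}}$ (its Lemma~\ref{lemma:Oct2zzz}, via Borel's isomorphism), and then invokes the full generalized-permutahedron machinery: the Fink--M\'esz\'aros--St.~Dizier theorem that $\Newton(\mathfrak{S}_w)=\mathcal{S}_{D(w)}$ and that $\mathfrak{S}_w$ has SNP, together with the integer decomposition property of Minkowski sums of generalized permutahedra (Theorem~\ref{thm:gidp}), to conclude $[x^{\delta}]F\neq 0\iff\delta\in\mathcal{S}_D$, and finally Theorem~\ref{thm:ARY1} to translate membership in $\mathcal{S}_D$ into nonemptiness of ${\sf Tab}$. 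You instead bypass the polytope theory entirely for the vanishing direction: from $[x^{\delta}]F>0$ you split $\delta=\sum_i\alpha^{(i)}$ by monomial nonnegativity, and Kohnert's rule hands you the filling of each $D(w^{(i)})$ directly. This is more elementary for the one implication needed, trading the SNP/polymatroid citations for Kohnert's theorem; the price is that you do not get the biconditional $[x^{\delta}]F\neq 0\iff{\sf Tab}\neq\emptyset$, which the paper needs for Theorem~\ref{thm:schubnec} and for comparing the symmetric and asymmetric tests (Proposition~\ref{prop:Oct4yyy}). Your max-flow formulation for the complexity claim is likewise a legitimate alternative to the paper's linear-programming route through Theorems~\ref{thm:ARY1}--\ref{thm:relaxation_equivalence} and Corollary~\ref{cor:runtime}, and arguably more concrete; both yield $O({\sf poly}(n,k))$.
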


\begin{example}
\label{first:exa}
Let $w^{(1)}=3256147, w^{(2)}=2143657, w^{(3)}=4632175$. Below we depict $D$. The numerically 
labelled boxes are forced by conditions (a) and (b) for any (putative) $T\in {\sf Tab}$.
\begin{center}
\begin{tikzpicture}[scale=.4]
\draw (0,0) rectangle (7,7);
\draw (0,7) rectangle (1,6) node[pos=.5] {$1$};
\draw (1,7) rectangle (2,6) node[pos=.5] {$1$};
\draw (0,6) rectangle (1,5) node[pos=.5] {$2$};
\draw (0,5) rectangle (1,4) node[pos=.5] {$3$};
\draw (0,4) rectangle (1,3) node[pos=.5] {$4$};
\draw (3,5) rectangle (4,4) node[pos=.5] {$ a $};
\draw (3,4) rectangle (4,3) node[pos=.5] {$ b$};
\filldraw (2.5,6.5) circle (.5ex);
\draw[line width = .2ex] (2.5,0) -- (2.5,6.5) -- (7,6.5);
\filldraw (1.5,5.5) circle (.5ex);
\draw[line width = .2ex] (1.5,0) -- (1.5,5.5) -- (7,5.5);
\filldraw (4.5,4.5) circle (.5ex);
\draw[line width = .2ex] (4.5,0) -- (4.5,4.5) -- (7,4.5);
\filldraw (5.5,3.5) circle (.5ex);
\draw[line width = .2ex] (5.5,0) -- (5.5,3.5) -- (7,3.5);
\filldraw (0.5,2.5) circle (.5ex);
\draw[line width = .2ex] (0.5,0) -- (0.5,2.5) -- (7,2.5);
\filldraw (3.5,1.5) circle (.5ex);
\draw[line width = .2ex] (3.5,0) -- (3.5,1.5) -- (7,1.5);
\filldraw (6.5,0.5) circle (.5ex);
\draw[line width = .2ex] (6.5,0) -- (6.5,0.5) -- (7,0.5);
\end{tikzpicture}
\begin{tikzpicture}[scale=.4]
\draw (0,0) rectangle (7,7);
\draw (0,7) rectangle (1,6) node[pos=.5] {$1$};
\draw (2,5) rectangle (3,4) node[pos=.5] {$c $};
\draw (5,3) rectangle (4,2) node[pos=.5] {$d$};

\filldraw (1.5,6.5) circle (.5ex);
\draw[line width = .2ex] (1.5,0) -- (1.5,6.5) -- (7,6.5);
\filldraw (0.5,5.5) circle (.5ex);
\draw[line width = .2ex] (0.5,0) -- (0.5,5.5) -- (7,5.5);
\filldraw (3.5,4.5) circle (.5ex);
\draw[line width = .2ex] (3.5,0) -- (3.5,4.5) -- (7,4.5);
\filldraw (2.5,3.5) circle (.5ex);
\draw[line width = .2ex] (2.5,0) -- (2.5,3.5) -- (7,3.5);
\filldraw (5.5,2.5) circle (.5ex);
\draw[line width = .2ex] (5.5,0) -- (5.5,2.5) -- (7,2.5);
\filldraw (4.5,1.5) circle (.5ex);
\draw[line width = .2ex] (4.5,0) -- (4.5,1.5) -- (7,1.5);
\filldraw (6.5,0.5) circle (.5ex);
\draw[line width = .2ex] (6.5,0) -- (6.5,0.5) -- (7,0.5);
\end{tikzpicture}
\begin{tikzpicture}[scale=.4]
\draw (0,0) rectangle (7,7);
\draw (0,7) rectangle (1,6) node[pos=.5] {$1$};
\draw (1,7) rectangle (2,6) node[pos=.5] {$1$};
\draw (2,7) rectangle (3,6) node[pos=.5] {$1$};

\draw (0,6) rectangle (1,5) node[pos=.5] {$2$};
\draw (1,6) rectangle (2,5) node[pos=.5] {$2$};
\draw (2,6) rectangle (3,5) node[pos=.5] {$2$};
\draw (4,6) rectangle (5,5) node[pos=.5] {$e $};

\draw (0,5) rectangle (1,4) node[pos=.5] {$3$};
\draw (1,5) rectangle (2,4) node[pos=.5] {$3$};

\draw (0,4) rectangle (1,3) node[pos=.5] {$4$};

\draw (4,2) rectangle (5,1) node[pos=.5] {$f $};

\filldraw (3.5,6.5) circle (.5ex);
\draw[line width = .2ex] (3.5,0) -- (3.5,6.5) -- (7,6.5);
\filldraw (5.5,5.5) circle (.5ex);
\draw[line width = .2ex] (5.5,0) -- (5.5,5.5) -- (7,5.5);
\filldraw (2.5,4.5) circle (.5ex);
\draw[line width = .2ex] (2.5,0) -- (2.5,4.5) -- (7,4.5);
\filldraw (1.5,3.5) circle (.5ex);
\draw[line width = .2ex] (1.5,0) -- (1.5,3.5) -- (7,3.5);
\filldraw (0.5,2.5) circle (.5ex);
\draw[line width = .2ex] (0.5,0) -- (0.5,2.5) -- (7,2.5);
\filldraw (6.5,1.5) circle (.5ex);
\draw[line width = .2ex] (6.5,0) -- (6.5,1.5) -- (7,1.5);
\filldraw (4.5,0.5) circle (.5ex);
\draw[line width = .2ex] (4.5,0) -- (4.5,0.5) -- (7,0.5);
\end{tikzpicture}
\begin{tikzpicture}[scale=.4]
\node at (2.5,6.5) {$\leq 1$};
\node at (2.5,5.5) {$\leq 2$};
\node at (2.5,4.5) {$\leq 3$};
\node at (2.5,3.5) {$\leq 4$};
\node at (2.5,2.5) {$\leq 5$};
\node at (2.5,1.5) {$\leq 6$};
\node at (2.5,0.5) {$\leq 7$};
\end{tikzpicture}
\end{center}

Condition (b) forces
$e\leq 2, \ a,c \leq 3, \ b\leq 4,\  d\leq 5,  \ f\leq 6$.
Thus, to satisfy (c), $e=2$ is also forced, which implies $a,c=3$. So $T$ has at least five $3$'s, 
violating (c) for $\ell=3$.

Our idea (see Section~\ref{sec:4}) uses that $C_{w^{(1)},w^{(2)},w^{(3)}}=0$ if 
${\mathfrak S}_{w_0}=x_1^{6} x_2^5  x_3^4  x_4^3  x_5^2  x_6$ does not appear in the product of Schubert polynomials ${\mathfrak S}_{w^{(1)}} {\mathfrak S}_{w^{(2)}} {\mathfrak S}_{w^{(3)}}$, combined with  an argument that the rule of Theorem~\ref{thm:main} permits an efficient check of this vanishing condition. \qed
\end{example}

\subsection{Organization}
Section~\ref{sec:2} discusses generalized permutahedra; we derive facts we will use. Section~\ref{sec:3} reviews the subfamily of Schubitopes. In Section~\ref{sec:4} we state Theorem~\ref{thm:variation}, an ``asymmetric'' version of Theorem~\ref{thm:main}; it is a stronger
test, see Proposition~\ref{prop:Oct4yyy}. Theorem~\ref{thm:schubnec} gives 
linear inequalities necessary for $C_{w^{(1)},\ldots,w^{(k)}}>0$.  Theorems~\ref{thm:main},~\ref{thm:variation},~\ref{thm:schubnec}, and Proposition~\ref{prop:Oct4yyy} are proved together, as they follow from the same reasoning. In Section~\ref{sec:5}, we 
compare with the vanishing criteria of \cite{Knutson:descent} and \cite{Purbhoo:root}. 
We show examples that our test captures but are not captured by those criteria, and conversely.

\section{Newton Polytopes of products}\label{sec:2}

If $f$ is an element of a polynomial ring whose variables are indexed by some set $I$,
the \emph{support} of $f$ is the lattice point set in $\mathbb R^I$ consisting of the exponent vectors of the monomials that have nonzero coefficient in~$f$.
The \emph{Newton polytope} 
$\Newton(f)\subseteq\mathbb R^I$ 
is the convex hull of the support of~$f$. A polynomial $f$ has \emph{saturated Newton polytope} (\emph{SNP}) if every lattice point in $\Newton(f)$ is a vector in the support of $f$ \cite{MTY}. 

The \emph{standard permutahedron} is the polytope in $\mathbb{R}^n$ whose vertices consist of all permutations of the entries of the vector $(0,1,\ldots,n-1)$. A \emph{generalized permutahedron} is a deformation of the standard permutahedron obtained by translating the vertices in such a way that all edge directions and orientations are preserved (edges are allowed to degenerate to points). Generalized permutahedra are uniquely parametrized by \emph{submodular functions} (see \cite[Theorem 12.3]{Aguiar:hopf}). These are maps
\[z:2^{[n]}\to\mathbb{R},\] 
such that $z_\emptyset=0$ and 
\[z_I+z_J\geq z_{I\cup J}+z_{I\cap J}  \text{ \ for all $I,J\subseteq [n]$.}\] 
Given $z$, the associated generalized permutahedron is given by
\[P(z)=\left \{t\in\mathbb{R}^n: \sum_{i\in I}{t_i}\leq z_I \mbox{ for } I\neq [n], \mbox{ and } \sum_{i=1}^{n}{t_i}=z_{[n]}  \right \}. \] 

The vertices of generalized permutahedra have been determined. 
\begin{proposition}[{\cite[Corollary 44.3a]{Schrijver}}]
\label{prop:vertex}
	Let $P(z)$ be a generalized permutahedron in $\mathbb{R}^n$. The vertices of $P(z)$ are 
	$\{v(w): w\in S_n  \} $
	where $v(w)=(v_1,\ldots,v_n)\in {\mathbb R}^n$ is defined by
	\begin{equation}
	\label{eqn:Oct21xyz}
	v_{w_k}=z_{{\{w_1,\ldots,w_k\}}} -z_{{\{w_1,\ldots,w_{k-1}\}}}.
	\end{equation}
\end{proposition}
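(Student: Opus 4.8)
The plan is to verify directly that each point $v(w)$ defined by \eqref{eqn:Oct21xyz} is a vertex of $P(z)$, and that these are all of them. First I would check feasibility: for the identity ordering, \eqref{eqn:Oct21xyz} telescopes to give $\sum_{i=1}^n v_i = z_{[n]}$, so $v(w)$ satisfies the equality constraint; and for any $I \subseteq [n]$ with $I \neq [n]$, I would show $\sum_{i \in I} v_i \leq z_I$ by repeatedly applying submodularity. Concretely, writing $I = \{w_{k_1}, \ldots, w_{k_m}\}$ with $k_1 < \cdots < k_m$, each summand $v_{w_{k_j}} = z_{\{w_1,\ldots,w_{k_j}\}} - z_{\{w_1,\ldots,w_{k_j-1}\}}$ is a ``marginal gain'' of adding $w_{k_j}$ to a prefix; submodularity says marginal gains only decrease as the base set grows, so $v_{w_{k_j}} \leq z_{A \cup \{w_{k_j}\}} - z_A$ whenever $A \subseteq \{w_1,\ldots,w_{k_j-1}\}$. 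Summing these telescoping bounds along the chain $\emptyset \subset \{w_{k_1}\} \subset \{w_{k_1},w_{k_2}\} \subset \cdots \subset I$ yields $\sum_{i\in I} v_i \leq z_I$.

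Next I would identify which inequalities are tight at $v(w)$: by the telescoping computation, $\sum_{i \in \{w_1,\ldots,w_k\}} v_i = z_{\{w_1,\ldots,w_k\}}$ for every $k$, so the $n$ prefix sets $S_k := \{w_1,\ldots,w_k\}$ (for $1 \le k \le n$) all give equalities. These $n$ constraints — the $n-1$ facet inequalities for $S_1, \ldots, S_{n-1}$ together with the ambient equality $\sum t_i = z_{[n]}$ — have linearly independent normal vectors, since the indicator vectors $\mathbf{1}_{S_1}, \ldots, \mathbf{1}_{S_n}$ are independent (each $\mathbf{1}_{S_k} - \mathbf{1}_{S_{k-1}}$ is a distinct standard basis vector). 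Hence $v(w)$ is the unique solution to this subsystem and is a vertex of $P(z)$.

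Finally, for the converse — every vertex arises this way — I would argue that a vertex $v$ of $P(z)$ must make $n$ linearly independent constraints tight, hence at least $n-1$ of the facet inequalities $\sum_{i \in I} t_i \le z_I$. I would show the collection of tight sets $\mathcal{T} = \{I : \sum_{i\in I} v_i = z_I\}$ is closed under union and intersection (this is where submodularity re-enters: if $I, J$ are tight then $z_I + z_J = \sum_I v + \sum_J v = \sum_{I\cup J} v + \sum_{I \cap J} v \le z_{I\cup J} + z_{I\cap J} \le z_I + z_J$, forcing equality throughout). A union- and intersection-closed family containing enough sets to cut out a point must contain a maximal chain $\emptyset \subsetneq S_1 \subsetneq \cdots \subsetneq S_{n-1} \subsetneq [n]$, and reading off the order in which elements are added recovers a $w \in S_n$ with $v = v(w)$ via \eqref{eqn:Oct21xyz}.

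The main obstacle is the converse direction: extracting a full flag of tight sets from the lattice $\mathcal{T}$ of tight sets, and arguing carefully that a vertex forces $|\mathcal{T}|$ large enough (i.e. that the equality constraint alone plus fewer than $n-1$ facets cannot pin down a point). The feasibility and vertex-verification steps are routine telescoping-plus-submodularity arguments; alternatively, one could bypass the converse entirely by invoking that $P(z)$ is a generalized permutahedron — so its normal fan coarsens the braid fan and it has at most $|S_n|$ vertices — but making that rigorous still amounts to the same lattice-theoretic bookkeeping, so citing \cite[Corollary 44.3a]{Schrijver} for this classical polymatroid fact is the cleanest route.
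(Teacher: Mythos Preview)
The paper does not supply its own proof of this proposition; it is stated with attribution to \cite[Corollary 44.3a]{Schrijver} and used as a black box. So there is no in-paper argument to compare against, and your final sentence already lands on exactly what the authors do.

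That said, your sketch is the standard polymatroid proof and is essentially correct. The feasibility and vertex-verification steps are fine as written. For the converse, the ``main obstacle'' you flag can be dispatched cleanly: once you know the tight-set family $\mathcal{T}$ is closed under union and intersection and contains $\emptyset$ and $[n]$, take any maximal chain $\emptyset = S_0 \subsetneq S_1 \subsetneq \cdots \subsetneq S_m = [n]$ in $\mathcal{T}$. If some step $S_{j-1} \subsetneq S_j$ has $|S_j \setminus S_{j-1}| \geq 2$, then for every $T \in \mathcal{T}$ the set $(T \cap S_j) \cup S_{j-1}$ lies in $\mathcal{T}$ strictly between $S_{j-1}$ and $S_j$ unless $T \cap (S_j \setminus S_{j-1})$ is empty or all of $S_j \setminus S_{j-1}$; maximality of the chain forces the latter dichotomy for every $T$. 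Hence every indicator $\mathbf{1}_T$ is constant on $S_j \setminus S_{j-1}$, so the indicators all lie in the hyperplane $x_i = x_{i'}$ for any two $i,i' \in S_j \setminus S_{j-1}$, contradicting that they span $\mathbb{R}^n$. Thus $m = n$, the chain is a full flag, and $v = v(w)$ for the permutation $w$ it determines.
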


It is well-known that the class of generalized permutahedra is closed under Minkowski sums (see for instance \cite[Lemma 2.2]{Ardila}). We provide a proof for completeness.
\begin{lemma}
	\label{lem:minksum}
	If $P(z)$ and $P(z')$ are generalized permutahedra, then 
	\[P(z)+P(z')=P(z+z').\]
\end{lemma}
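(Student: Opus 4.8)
The plan is to verify the two inclusions $P(z)+P(z')\subseteq P(z+z')$ and $P(z+z')\subseteq P(z)+P(z')$ separately. The first inclusion is the easy direction: if $t\in P(z)$ and $t'\in P(z')$, then for every $I\neq[n]$ we have $\sum_{i\in I}(t_i+t'_i)=\sum_{i\in I}t_i+\sum_{i\in I}t'_i\leq z_I+z'_I=(z+z')_I$, and summing over all of $[n]$ gives equality with $(z+z')_{[n]}$; hence $t+t'\in P(z+z')$. This uses nothing beyond the defining inequalities. I should also note in passing that $z+z'$ is again submodular (the submodular inequality is additive), so $P(z+z')$ is genuinely a generalized permutahedron and Proposition~\ref{prop:vertex} applies to it.

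For the reverse inclusion, the clean approach is to work with vertices. Since $P(z+z')$ is a polytope, it is the convex hull of its vertices, so it suffices to show every vertex of $P(z+z')$ lies in $P(z)+P(z')$; the right-hand side is convex (a Minkowski sum of convex sets), so it will then contain the whole hull. By Proposition~\ref{prop:vertex}, the vertices of $P(z+z')$ are exactly the points $v(w)$ for $w\in S_n$, where $v(w)_{w_k}=(z+z')_{\{w_1,\ldots,w_k\}}-(z+z')_{\{w_1,\ldots,w_{k-1}\}}$. Splitting this difference along the sum $z+z'$, we get $v(w)=v_z(w)+v_{z'}(w)$, where $v_z(w)$ and $v_{z'}(w)$ are the points defined by the same formula \eqref{eqn:Oct21xyz} applied to $z$ and $z'$ respectively. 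By Proposition~\ref{prop:vertex} again, $v_z(w)$ is a vertex of $P(z)$ and $v_{z'}(w)$ is a vertex of $P(z')$; in particular both lie in their respective polytopes, so $v(w)=v_z(w)+v_{z'}(w)\in P(z)+P(z')$. This completes the reverse inclusion and hence the lemma.

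The only genuine subtlety — and the step I would be most careful about — is the legitimacy of the vertex argument: I am implicitly using that the point $v(w)$ produced by formula \eqref{eqn:Oct21xyz} really is a vertex (equivalently, a point) of $P(z+z')$, which is exactly what Proposition~\ref{prop:vertex} provides, and likewise that $v_z(w)\in P(z)$, $v_{z'}(w)\in P(z')$. Since Proposition~\ref{prop:vertex} is quoted for arbitrary generalized permutahedra and $z$, $z'$, $z+z'$ are all submodular, there is no gap here; the same decomposition $v(w)=v_z(w)+v_{z'}(w)$ is immediate from the telescoping formula, term by term over each $w_k$. One could alternatively argue the reverse inclusion directly by a linear-programming / submodular-polyhedron duality argument (expressing a point of $P(z+z')$ as a convex combination and greedily splitting coordinates), but the vertex decomposition is shorter and self-contained given what is already available in the excerpt, so that is the route I would take.
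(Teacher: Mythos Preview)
Your proposal is correct and follows essentially the same approach as the paper: the easy inclusion is dispatched directly from the defining inequalities, and the reverse inclusion is obtained by taking an arbitrary vertex $v(w)$ of $P(z+z')$ via Proposition~\ref{prop:vertex}, splitting it along \eqref{eqn:Oct21xyz} as the sum of the corresponding vertices of $P(z)$ and $P(z')$, and concluding by convexity. Your write-up is slightly more explicit (spelling out the inequality check for the first inclusion and noting that $z+z'$ is submodular), but the argument is the same.
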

\begin{proof}
	Clearly $P(z)+P(z')\subseteq P(z+z')$. For the opposite containment, let $q$ be a vertex of $P(z+z')$. 
	By Proposition~\ref{prop:vertex}, write $q$ in the form $q=v(w)$ for some $w\in S_n$. Let $p$ and $p'$ be the vertices of $P(z)$ and $P(z')$ respectively corresponding to $w$. By (\ref{eqn:Oct21xyz}), $q=p+p'\in P(z)+P(z')$. Convexity implies $ P(z+z')\subseteq P(z)+P(z')$.
\end{proof}

It follows easily from \cite[Theorem 46.2]{Schrijver} that whenever $z$ and $z'$ are integer-valued, $P(z)\cap P(z')$ is either empty or an integral polytope (all vertices are lattice points). This is used to prove that 
\emph{integer polymatroids} \cite[Chapter~44]{Schrijver} satisfy a generalization of the \emph{integer decomposition property}. We state and prove (for convenience) the special case that applies to generalized permutahedra:
 
\begin{theorem}[{\cite[Corollary 46.2c]{Schrijver}}]
	\label{thm:gidp}
	If $P(z)$ and $P(z')$ are integral generalized permutahedra in $\mathbb{R}^n$, then
	\[(P(z)\cap \mathbb{Z}^n) + (P(z')\cap \mathbb{Z}^n) = (P(z)+P(z'))\cap \mathbb{Z}^n.  \]
\end{theorem}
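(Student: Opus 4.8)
I would prove the two inclusions separately. The inclusion $\subseteq$ is immediate: if $p\in P(z)\cap\mathbb{Z}^n$ and $p'\in P(z')\cap\mathbb{Z}^n$, then $p+p'$ is a lattice point and, by Lemma~\ref{lem:minksum}, lies in $P(z)+P(z')=P(z+z')$.

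For the reverse inclusion, fix a lattice point $q\in(P(z)+P(z'))\cap\mathbb{Z}^n$; by Lemma~\ref{lem:minksum} this set equals $P(z+z')\cap\mathbb{Z}^n$. The goal is to write $q=p+p'$ with $p\in P(z)\cap\mathbb{Z}^n$ and $p'\in P(z')\cap\mathbb{Z}^n$. The idea is to intersect $P(z)$ with the reflected-and-translated copy $q-P(z')=\{q-t:t\in P(z')\}$ and extract an integral vertex. The crucial point is that $q-P(z')$ is again an integral generalized permutahedron. I would verify this by exhibiting its submodular function explicitly: writing $q_J:=\sum_{i\in J}q_i$, one checks that $q-P(z')=P(w')$, where $w'_J:=q_J+z'_{[n]\setminus J}-z'_{[n]}$. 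The equality of polytopes is a direct unwinding of the defining inequalities (using $q\in P(z+z')$ to match the hyperplane constraint $\sum_i t_i=z'_{[n]}$), the function $w'$ is integer-valued because $q$ and $z'$ are, and submodularity of $w'$ reduces to submodularity of $J\mapsto z'_{[n]\setminus J}$, which is in turn just the submodularity of $z'$ read on complements. Conceptually, the negative of the standard permutahedron is a translate of the standard permutahedron, so $t\mapsto q-t$ preserves the class of generalized permutahedra; the explicit $w'$ merely makes this quantitative.

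Granting that, $P(z)\cap(q-P(z'))=P(z)\cap P(w')$ is, by the integrality statement quoted from \cite[Theorem 46.2]{Schrijver}, either empty or an integral polytope. It is nonempty: since $q\in P(z)+P(z')$, write $q=p_0+p_0'$ with $p_0\in P(z)$ and $p_0'\in P(z')$; then $p_0=q-p_0'\in P(z)\cap(q-P(z'))$. So this (bounded) intersection has a vertex $p$, and $p$ is a lattice point. Now $p\in P(z)\cap\mathbb{Z}^n$, while $q-p$ is a lattice point lying in $P(z')$ (as $p\in q-P(z')$), hence $q-p\in P(z')\cap\mathbb{Z}^n$; thus $q=p+(q-p)$ lies in the left-hand side, completing the argument.

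The main obstacle is the assertion that $q-P(z')$ is a generalized permutahedron: taking the negative of a polytope reverses edge orientations, which the definition forbids, so one genuinely must check that the class is stable under $t\mapsto q-t$ — this is exactly what the explicit submodular function $w'$ (or the observation that the negative of the standard permutahedron is one of its translates) accomplishes. Once that is in hand, the remainder is routine manipulation with Lemma~\ref{lem:minksum} and the cited integrality fact.
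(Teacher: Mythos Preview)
Your proof is correct and follows essentially the same route as the paper: form $Q=q-P(z')$, argue it is an integral generalized permutahedron, intersect with $P(z)$, and invoke the cited integrality of such intersections to extract a lattice point $p$, then decompose $q=p+(q-p)$. The paper handles the key step with ``clearly, $Q$ is a generalized permutahedron (by the deformation description),'' whereas you supply the explicit submodular function $w'_J=q_J+z'_{[n]\setminus J}-z'_{[n]}$; this is a welcome elaboration but not a different argument.
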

\begin{proof}
Let $r\in(P(z)+P(z'))\cap \mathbb{Z}^n$. Set $Q=r+(-1)P(z')$. Clearly, $Q$ is a generalized permutahedron (by the deformation description). Also note that $r=p+p'$ for some $p\in P(z)$ and $p'\in P(z')$, so $p\in P\cap Q$ and $P\cap Q\neq \emptyset$. Since both $r$ and $z'$ are integral, $Q$ is an integral polytope. Thus $P\cap Q$ contains an integer point $q$. By definition of $Q$, the lattice point $r-q$ is in $P(z')$. Finally, we have 
\[r=q+(r-q)\in (P(z)\cap \mathbb{Z}^n)+(P(z')\cap \mathbb{Z}^n).\qedhere\]
\end{proof}

Therefore, in the realm of generalized permutahedra, SNP carries through products.

\begin{proposition}
\label{prop:genfact}
	If $f,g\in\mathbb{R}_{\geq 0}[x_1,\ldots,x_n]$ have SNP and $\Newton(f),\Newton(g)$ are generalized permutahedra then 
	\begin{enumerate}
		\item[(i)] $\Newton(fg)$ is a generalized permutahedron;
		\item[(ii)] $fg$ has SNP.
	\end{enumerate}
\end{proposition}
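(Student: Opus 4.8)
The plan is to derive both statements directly from Theorem~\ref{thm:gidp} together with the elementary observation that the support of a product of polynomials with nonnegative coefficients is exactly the sumset of the supports of the factors. Concretely, since $f,g \in \mathbb{R}_{\geq 0}[x_1,\ldots,x_n]$, there is no cancellation: a monomial $x^a$ appears in $fg$ with nonzero (in fact positive) coefficient if and only if $a = b + c$ for some $b \in \operatorname{supp}(f)$ and $c \in \operatorname{supp}(g)$. Hence $\operatorname{supp}(fg) = \operatorname{supp}(f) + \operatorname{supp}(g)$ as subsets of $\mathbb{Z}^n$, and therefore $\Newton(fg) = \operatorname{conv}(\operatorname{supp}(f) + \operatorname{supp}(g)) = \operatorname{conv}(\operatorname{supp}(f)) + \operatorname{conv}(\operatorname{supp}(g)) = \Newton(f) + \Newton(g)$, using that the convex hull of a sumset is the Minkowski sum of the convex hulls.

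For part (i): by hypothesis $\Newton(f) = P(z)$ and $\Newton(g) = P(z')$ are generalized permutahedra; since these are integral polytopes (their vertices are lattice points, being elements of $\operatorname{supp}(f)$ and $\operatorname{supp}(g)$), Lemma~\ref{lem:minksum} gives $\Newton(fg) = P(z) + P(z') = P(z+z')$, which is a generalized permutahedron, and integral since $z+z'$ is integer-valued. For part (ii): we must show every lattice point of $\Newton(fg)$ lies in $\operatorname{supp}(fg)$. Let $r \in \Newton(fg) \cap \mathbb{Z}^n = (P(z) + P(z')) \cap \mathbb{Z}^n$. By Theorem~\ref{thm:gidp}, $r = b + c$ with $b \in P(z) \cap \mathbb{Z}^n$ and $c \in P(z') \cap \mathbb{Z}^n$. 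Since $f$ has SNP, $b \in \Newton(f) \cap \mathbb{Z}^n = \operatorname{supp}(f)$; similarly $c \in \operatorname{supp}(g)$. Hence $r = b + c \in \operatorname{supp}(f) + \operatorname{supp}(g) = \operatorname{supp}(fg)$, as required.

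I do not anticipate a genuine obstacle here: the argument is a clean assembly of the preceding lemmas. The one point deserving a word of care is the claim $\operatorname{conv}(A + B) = \operatorname{conv}(A) + \operatorname{conv}(B)$ for finite point sets $A, B$ — standard, but worth stating — and the observation that $\operatorname{supp}(fg) = \operatorname{supp}(f) + \operatorname{supp}(g)$ genuinely uses nonnegativity of coefficients to rule out cancellation, so this hypothesis is where the result would fail without it. Everything else is a direct invocation of Lemma~\ref{lem:minksum} (for (i)) and Theorem~\ref{thm:gidp} (for (ii)).
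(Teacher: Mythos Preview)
Your proof is correct and follows essentially the same route as the paper: establish $\Newton(fg)=\Newton(f)+\Newton(g)$, then invoke Lemma~\ref{lem:minksum} for (i) and Theorem~\ref{thm:gidp} together with the SNP hypotheses for (ii). The paper's version is terser---it cites $\Newton(fg)=\Newton(f)+\Newton(g)$ as a general fact valid for \emph{all} polynomials (not just those with nonnegative coefficients), whereas you derive it via $\operatorname{supp}(fg)=\operatorname{supp}(f)+\operatorname{supp}(g)$; your more explicit unpacking of where nonnegativity and integrality enter is a welcome clarification but not a different argument.
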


\begin{proof}
	For any polynomials $f$ and $g$,
	$\Newton(fg)=\Newton(f)+\Newton(g)$. 
	Statement (i) follows from Lemma \ref{lem:minksum}. Statement (ii) follows from Lemma \ref{lem:minksum} and Theorem \ref{thm:gidp}.
\end{proof}

\section{Schubitopes, and an integer linear program}\label{sec:3}

We are interested in a particular family of generalized permutahedra.
For $D\subseteq [n]\times [m]$, the \emph{Schubitope} ${\mathcal S}_D$ was defined 
by C.~Monical, N.~Tokcan, and the second author \cite{MTY}.
Fix $S\subseteq [n]$ and a column $c \in [m]$. Let $\omega_{c,S}(D)$ be formed by reading $c$ from top to bottom and recording 
\begin{itemize} 
\item $($ if $(r,c) \notin D$ and $r \in S$,  
\item $)$ if $(r,c) \in D$ and $r \notin S$, and
\item $\star$ if $(r,c) \in D$ and $r \in S$.
\end{itemize}
Let 
\[\theta_{D}^c(S)= \#\text{paired $(\ )$'s in $\omega_{c,S}({D})$} + 
\#\text{$\star$'s in $\omega_{c,S}({D})$}.\]  
Set 
$\theta_{D}(S) = \sum_{c \in [n]} \theta_{D}^c(S)$. Define the \emph{Schubitope} as  
\[{\mathcal S}_{D}=\left\{(\alpha_1,\ldots,\alpha_n)\in {\mathbb R}_{\geq 0}^n:
\sum_{i=1}^n \alpha_i=\#{D} \text{\ and \ }
\sum_{i\in S}\alpha_i \leq \theta_{D}(S) \text{\ for all $S \subset [n]$}\right\}.\]

\begin{example}[\emph{cf.}~{\cite[Section~1]{MTY}}]
	Let $w=21543$. The Schubert polynomial of $w$ is
	\begin{align*}
		\mathfrak{S}_w=x_1^3 x_2+x_1^3 x_3&+x_1^3x_4+x_1^2x_2^2+x_1^2x_3^2+2x_1^2x_2 x_3+x_1^2 x_2 x_4 +x_1^2 x_3 x_4\\&+x_1 x_2 x_3^2+x_1 x_2^2 x_3 +x_1 x_2^2 x_4 +x_1 x_3^2 x_4 +x_1 x_2 x_3 x_4 .
	\end{align*}
	As stated in Theorem~\ref{thm:isSNP}, $\mathcal{S}_{D(w)}=\mathrm{Newton}(\mathfrak{S}_w)$. This generalized
	permutahedron and a minimal set of defining inequalities are shown in Figure \ref{fig:newtonexample}.\qed
\end{example}
\begin{figure}[h]
	\centering
	\begin{minipage}{.5\linewidth}
		\flushleft
		\begin{tikzpicture}%
		[x={(-0.358390cm, -0.633497cm)},
		y={(0.794712cm, 0.178421cm)},
		z={(-0.489886cm, 0.752893cm)},
		scale=1.75,
		back/.style={densely dotted, thick},
		edge/.style={color=orange, thick},
		facet/.style={fill=green,fill opacity=0.4},
		vertex/.style={inner sep=1pt,circle,draw=blue!25!black,fill=blue!75!black,thick,anchor=base}]
		\coordinate (1, 0, 2) at (1, 0, 2);
		\coordinate (3, 1, 0) at (3, 1, 0);
		\coordinate (1, 1, 2) at (1, 1, 2);
		\coordinate (1, 2, 0) at (1, 2, 0);
		\coordinate (1, 2, 1) at (1, 2, 1);
		\coordinate (3, 0, 1) at (3, 0, 1);
		\coordinate (2, 0, 2) at (2, 0, 2);
		\coordinate (3, 0, 0) at (3, 0, 0);
		\coordinate (2, 2, 0) at (2, 2, 0);
		\draw[edge,back] (1, 0, 2) -- (1, 2, 0);
		\draw[edge,back] (1, 0, 2) -- (3, 0, 0);
		\draw[edge,back] (1, 2, 0) -- (3, 0, 0);
		\node[vertex] at (1, 1, 1)     {};
		\node[vertex] at (2, 0, 1)     {};
		\node[vertex] at (2, 1, 0)     {};
		\fill[facet] (1, 1, 2) -- (1, 2, 1) -- (2, 2, 0) -- (3, 1, 0) -- (3, 0, 1) -- (2, 0, 2) -- cycle {};
		\fill[facet] (2, 0, 2) -- (1, 0, 2) -- (1, 1, 2) -- cycle {};
		\fill[facet] (2, 2, 0) -- (1, 2, 0) -- (1, 2, 1) -- cycle {};
		\fill[facet] (3, 0, 0) -- (3, 1, 0) -- (3, 0, 1) -- cycle {};
		\draw[edge] (1, 0, 2) -- (1, 1, 2);
		\draw[edge] (1, 0, 2) -- (2, 0, 2);
		\draw[edge] (3, 1, 0) -- (3, 0, 1);
		\draw[edge] (3, 1, 0) -- (3, 0, 0);
		\draw[edge] (3, 1, 0) -- (2, 2, 0);
		\draw[edge] (1, 1, 2) -- (1, 2, 1);
		\draw[edge] (1, 1, 2) -- (2, 0, 2);
		\draw[edge] (1, 2, 0) -- (1, 2, 1);
		\draw[edge] (1, 2, 0) -- (2, 2, 0);
		\draw[edge] (1, 2, 1) -- (2, 2, 0);
		\draw[edge] (3, 0, 1) -- (2, 0, 2);
		\draw[edge] (3, 0, 1) -- (3, 0, 0);
		\node[vertex] at (1, 0, 2)     {};
		\node[vertex] at (3, 1, 0)     {};
		\node[vertex] at (1, 1, 2)     {};
		\node[vertex] at (1, 2, 0)     {};
		\node[vertex] at (1, 2, 1)     {};
		\node[vertex] at (3, 0, 1)     {};
		\node[vertex] at (2, 0, 2)     {};
		\node[vertex] at (3, 0, 0)     {};
		\node[vertex] at (2, 2, 0)     {};
		\node[vertex] at (2, 1, 1)     {};
		\node[left]  at (1, 0, 2)     {$x_1x_3^2x_4$};
		\node[below] at (3, 1, 0)     {$x_1^3x_2$};
		\node[above] at (1, 1, 2)     {$x_1x_2x_3^2$};
		\node[right] at (1, 2, 0)     {$x_1x_2^2x_4$};
		\node[right] at (1, 2, 1)     {$x_1x_2^2x_3$};
		\node[left]  at (3, 0, 1)     {$x_1^3x_3$};
		\node[left]  at (2, 0, 2)     {$x_1^2x_3^2$};
		\node[left]  at (3, 0, 0)     {$x_1^3x_4$};
		\node[right] at (2, 2, 0)     {$x_1^2x_2^2$};
		\node[above] at (2, 1, 1)     {$x_1^2x_2x_3$};
		\node[above] at (1, 1, 1)     {\color{black}{\transparent{0.6}{$x_1x_2x_3x_4$}}};
		\node[above] at (2, 0, 1)     {\color{black}{\transparent{0.6}{$x_1^2x_3x_4$}}};
		\node[above] at (2, 1, 0)     {\color{black}{\transparent{0.6}{$x_1^2x_2x_4$}}};
		\end{tikzpicture}
	\end{minipage}
	\begin{minipage}{.49\linewidth}
		\flushright
		\begin{align*}
			\alpha_1&\leq 3\\
			\alpha_2&\leq 2\\
			\alpha_3&\leq 2\\
			\alpha_4&\leq 1\\
			\alpha_1+\alpha_2+\alpha_3&\leq 4\\
			\alpha_1+\alpha_2+\alpha_4&\leq 4\\
			\alpha_1+\alpha_3+\alpha_4&\leq 4\\
			\alpha_2+\alpha_3+\alpha_4&\leq 3\\
			\alpha_1+\alpha_2+\alpha_3+\alpha_4&=4
		\end{align*}
	\end{minipage}
	\caption{${\mathcal S}_{D(21543)}=\Newton(\mathfrak{S}_{21543})$ and a minimal set of defining inequalities.}
	\label{fig:newtonexample}
\end{figure}
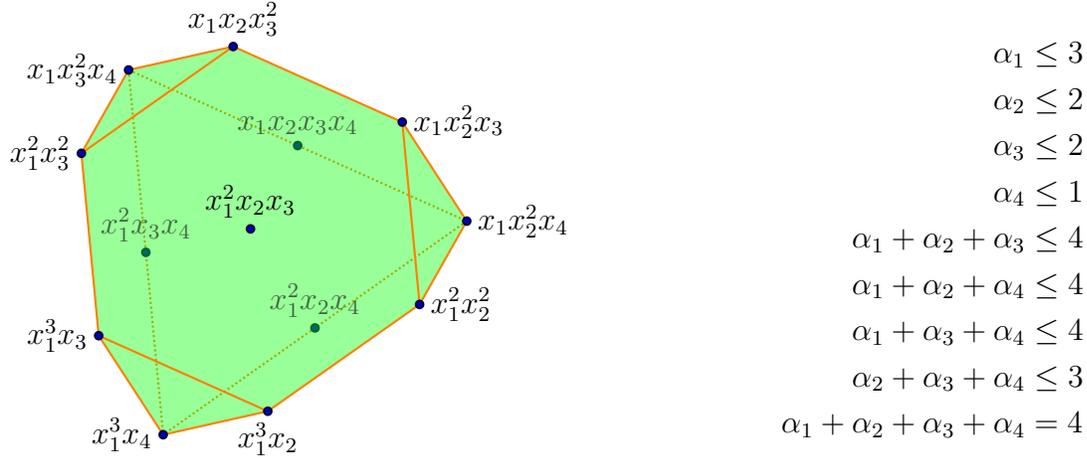

	We need some notions from A.~Adve, C.~Robichaux, and the second author's paper \cite{ARY}.  Given 
	$D \subseteq [n]\times [m]$ and $\alpha = (\alpha_1,\dots,\alpha_n) \in \mathbb{Z}_{\geq 0}^n$. Let 
	\[\mathcal{P}(D,\alpha) \subseteq \mathbb{R}^{n\times m}\]
	be the polytope whose points
	\[
	(\alpha_{ij})_{1\leq i\leq n, 1\leq j\leq m} = (\alpha_{11},\dots,\alpha_{n1},\dots,\alpha_{1m},\dots,\alpha_{nm})
	\]
	satisfy the inequalities (I),(II),(III) below.
	\begin{enumerate}
		\item[(I)] Column-injectivity: For all $i,j \in [n]$,
		\begin{align*}
		0 \leq \alpha_{ij} \leq 1.
		\end{align*}
		
		\item[(II)] Content: For all $i \in [n]$,
		\begin{align*}
		\sum_{j=1}^n \alpha_{ij} = \alpha_i.
		\end{align*}
		
		\item[(III)] Row bounds: For all $s,j \in [n]$,
		\begin{align*}
		\sum_{i=1}^s \alpha_{ij} \geq \#\{(i,j) \in D : i \leq s\}.
		\end{align*}
	\end{enumerate}

\excise{	
	\begin{proposition}\label{prop:column_content}
		Let $D \subseteq [n]^2$ and $\alpha = (\alpha_1,\dots,\alpha_n) \in \mathbb{Z}^n_{\geq 0}$ with $\alpha_1 + \dots + \alpha_n = \#D$. If $(\alpha_{ij}) \in \mathcal{P}(D,\alpha)$, then for each $j \in [n]$, we have
		\begin{align*}
		\sum_{i=1}^{n} \alpha_{ij} = \#\{(i,j) \in D : i \in [n]\}.
		\end{align*}
	\end{proposition}
}

Define ${\sf Tab}(D,\alpha)$ to be the set of fillings of $D$ with nonnegative integers such that
\begin{itemize}
\item[(a)] Each column is strictly increasing from top to bottom.
\item[(b)] Any label $\ell$ in row $r$ satisfies $\ell\leq r$.
\item[(c)] The number of $\ell$'s is $\alpha_{\ell}$.
\end{itemize}
 
	\begin{theorem}[{\cite{ARY}}]\label{thm:ARY1}
	Suppose $D \subseteq [n]\times [m]$ and $\alpha = (\alpha_1,\dots,\alpha_n) \in \mathbb{Z}_{\geq 0}^n$. 
	Then 
	\[\alpha\in {\mathcal S}_D\iff {\sf Tab}(D,\alpha)\neq\emptyset.\]
	
		The map 
		$f:{\sf Tab}(D,\alpha)\to {\mathcal P}(D,\alpha)$,
		that sets $\alpha_{ij}=1$ if the label $i$ appears in column $j$ of $D$, and set $\alpha_{ij}=0$ otherwise, is a bijection.
		Therefore ${\sf Tab}(D,\alpha) \neq \emptyset$ if and only if $\alpha_1 + \dots + \alpha_n = \#D$ and $\mathcal{P}(D,\alpha) \cap \mathbb{Z}^{n^2} \neq \emptyset$.
	\end{theorem}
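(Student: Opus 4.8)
The plan is to establish the "$\Longleftrightarrow$" by constructing the bijection $f$ explicitly and checking it carries fillings to lattice points of $\mathcal{P}(D,\alpha)$, then reading off the two combinatorial consequences. Since this is quoted from \cite{ARY}, I expect the proof here to be a condensed recapitulation; I will sketch it at that level.

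\textbf{Step 1: the map and its inverse.} Given $T \in {\sf Tab}(D,\alpha)$, define $f(T) = (\alpha_{ij})$ by $\alpha_{ij} = 1$ if the label $i$ occurs in column $j$ of $T$ and $\alpha_{ij}=0$ otherwise. This is well-defined into $\{0,1\}^{n\times m}$ because condition (a) (strict increase down each column) means a given value $i$ appears \emph{at most once} in any column. The candidate inverse takes a $0/1$-array and, in each column $j$, places the labels $\{i : \alpha_{ij}=1\}$ into the boxes of $D$ in that column, \emph{in increasing order from top to bottom}. To see $f$ and this map are mutually inverse, one must check that for $T\in{\sf Tab}(D,\alpha)$ the label in a box $(r,c)\in D$ is recovered correctly: the boxes of column $c$ in $D$, read top to bottom, carry strictly increasing labels by (a), so knowing the \emph{set} of labels in column $c$ determines the filling of that column. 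Conversely, reassembling a column in increasing order automatically satisfies (a).

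\textbf{Step 2: the filling conditions match the polytope inequalities.} This is the heart of the argument. Condition (I) ($0\le\alpha_{ij}\le1$) is exactly the observation in Step 1 that $f(T)$ is a $0/1$-array. Condition (II) ($\sum_j \alpha_{ij} = \alpha_i$) says the total number of boxes of $D$ labelled $i$ (counted across all columns, at most one per column) equals $\alpha_i$ --- this is precisely condition (c). Condition (III) ($\sum_{i\le s}\alpha_{ij} \ge \#\{(i,j)\in D : i\le s\}$) is the translation of condition (b): in column $j$, among the boxes of $D$ in rows $\le s$, each must carry a label $\le$ its row $\le s$ by (b), and distinct such boxes carry distinct labels by (a); hence at least $\#\{(i,j)\in D: i\le s\}$ of the indices $i\in\{1,\dots,s\}$ have $\alpha_{ij}=1$. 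For the reverse implication one argues, column by column, that (III) for all $s$ forces each of the $\#\{(i,j)\in D:i\le s\}$ chosen labels in the top $s$ rows to actually fit within rows $\le s$; a standard Hall/greedy argument (place the available labels greedily from the top) shows the increasing placement respects (b). The main obstacle is getting this reverse direction clean: one needs that the inequalities (III), holding for \emph{every} $s$, are enough to guarantee a valid (b)-respecting placement, which is where the submodular/greedy structure of $\mathcal{P}(D,\alpha)$ does the work.

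\textbf{Step 3: assembling the statement.} Once the bijection ${\sf Tab}(D,\alpha)\leftrightarrow \mathcal{P}(D,\alpha)\cap\mathbb{Z}^{n\times m}$ is established (for $\alpha$ with $\sum_i\alpha_i=\#D$; note (II) summed over $i$ forces $\sum_{ij}\alpha_{ij}=\sum_i\alpha_i$, while summing (III) at $s=n$ over all $j$ gives $\sum_{ij}\alpha_{ij}\ge\#D$, and (I) with a counting argument pins down equality --- so the content hypothesis is the consistency condition for $\mathcal{P}(D,\alpha)$ to be nonempty), the equivalence "${\sf Tab}(D,\alpha)\neq\emptyset \iff \alpha_1+\dots+\alpha_n=\#D$ and $\mathcal{P}(D,\alpha)\cap\mathbb{Z}^{n^2}\neq\emptyset$" is immediate. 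Finally, "$\alpha\in\mathcal{S}_D \iff {\sf Tab}(D,\alpha)\neq\emptyset$" follows by combining this with the description of $\mathcal{S}_D$ via the $\theta_D(S)$ inequalities: one shows $\mathcal{P}(D,\alpha)$ is the projection/fiber description of $\mathcal{S}_D$, i.e. that $\sum_{i\in S}\alpha_i\le\theta_D(S)$ for all $S$ is equivalent to the solvability of (I)--(III). This last equivalence is the deepest input and is the substance of \cite{ARY}; here I would cite it and give only the bijective part in detail.
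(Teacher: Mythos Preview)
The paper does not prove this theorem at all: it is quoted verbatim from \cite{ARY} and used as a black box, so there is no ``paper's own proof'' to compare against. You correctly anticipated this and supplied a sketch of how the argument in \cite{ARY} goes.

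Your sketch is essentially right. A couple of small points worth tightening. First, the bijection $f:{\sf Tab}(D,\alpha)\to \mathcal{P}(D,\alpha)\cap\mathbb{Z}^{n\times m}$ is only a bijection under the hypothesis $\sum_i\alpha_i=\#D$; without it, $\mathcal{P}(D,\alpha)$ can have integer points even when ${\sf Tab}(D,\alpha)=\emptyset$ (take $D=\{(1,1)\}$, $n=m=2$, $\alpha=(1,1)$). This is why the final ``if and only if'' carries the condition $\alpha_1+\cdots+\alpha_n=\#D$ explicitly, and your Step~3 handles this correctly by noting that summing (III) at $s=n$ over all columns forces $\sum_i\alpha_i\ge\#D$, so equality is needed to match column sizes. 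Second, the ``Hall/greedy'' language in Step~2 is heavier than necessary: once you know each column $j$ has exactly as many labels as boxes, say boxes in rows $r_1<\cdots<r_k$ and labels $\ell_1<\cdots<\ell_k$, condition~(III) at $s=r_t$ gives $\#\{p:\ell_p\le r_t\}\ge t$, hence $\ell_t\le r_t$ directly. Finally, as you note, the equivalence $\alpha\in\mathcal{S}_D\iff\mathcal{P}(D,\alpha)\neq\emptyset$ (linking the Schubitope inequalities to the system (I)--(III)) is the substantive content of \cite{ARY} and is not reproved here.
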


	\begin{theorem}[{\cite{ARY}}]\label{thm:relaxation_equivalence}
		Let $D \subseteq [n]\times [m]$ and $\alpha = (\alpha_1,\dots,\alpha_n) \in \mathbb{Z}^n$ with $\alpha_1 + \dots + \alpha_n = \#D$. Then $\mathcal{P}(D,\alpha) \cap \mathbb{Z}^{n\times m} \neq \emptyset$ if and only if $\mathcal{P}(D,\alpha) \neq \emptyset$. 
	\end{theorem}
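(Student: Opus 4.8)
The plan is to prove Theorem~\ref{thm:relaxation_equivalence} by exhibiting an integral point of $\mathcal{P}(D,\alpha)$ directly from a nonempty rational polytope, using the total unimodularity of the constraint matrix. First I would observe that the defining inequalities (I), (II), (III) of $\mathcal{P}(D,\alpha)$ all have the form $L \le x \le U$ together with equalities $\sum_j \alpha_{ij}=\alpha_i$ and lower bounds $\sum_{i \le s}\alpha_{ij} \ge b_{sj}$, where every coefficient appearing is $0$ or $1$. So I would first massage the system into a standard form $A x \le c$ (with some rows for equalities split into $\le$ and $\ge$), and the key claim becomes: the matrix $A$ is totally unimodular. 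Since all of $\alpha$, the constants $\#\{(i,j)\in D: i \le s\}$, and the bounds $0,1$ are integers, total unimodularity immediately yields that $\mathcal{P}(D,\alpha)$ is an integral polytope (its vertices are integer vectors), hence $\mathcal{P}(D,\alpha)\ne\emptyset \iff \mathcal{P}(D,\alpha)\cap\mathbb{Z}^{n\times m}\ne\emptyset$.

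The crux is therefore the total unimodularity of the incidence-type matrix arising from constraints (II) and (III) (constraint (I) just contributes rows that are standard unit vectors $\pm e_{ij}$, which never break total unimodularity). The cleanest route is to recognize this as a network-flow / interval matrix structure. Concretely, group the variables by column $j$: within a fixed column $j$, the relevant inequalities are $\sum_{i=1}^s \alpha_{ij}\ge b_{sj}$ for $s=1,\dots,n$, whose coefficient matrix is the lower-triangular all-ones matrix — a classic consecutive-ones (interval) matrix, and interval matrices are totally unimodular. The content equalities $\sum_j \alpha_{ij}=\alpha_i$ link the columns, but each variable $\alpha_{ij}$ appears in exactly one such equality (for row index $i$) and in a consecutive block of row-bound inequalities within its own column. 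I would invoke the Ghouila-Houri characterization or, more directly, cite the interval-matrix / network-matrix criterion (e.g.\ \cite[\S19]{Schrijver}) to conclude total unimodularity; alternatively one can check that $\mathcal{P}(D,\alpha)$ is the feasible region of a transportation-like problem with integral data, where integrality of the polytope is classical.

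The main obstacle I anticipate is verifying total unimodularity cleanly in the presence of \emph{both} the per-column prefix-sum inequalities and the per-row content equalities: one must confirm that these two families of constraints, when stacked, still yield a totally unimodular matrix. The safe way to handle this is to split each equality $\sum_j \alpha_{ij}=\alpha_i$ as a slack-free pair of inequalities is not needed; instead I would note that appending rows that are sums of a consecutive-ones structure along one axis to a consecutive-ones structure along the transverse axis is exactly the situation covered by the network-matrix theorem (this is the same phenomenon that makes transportation polytopes integral). Once total unimodularity is in hand, the theorem follows in one line: a nonempty polytope defined by $Ax \le c$ with $A$ totally unimodular and $c$ integral has an integral vertex, so $\mathcal{P}(D,\alpha)\cap\mathbb{Z}^{n\times m}\ne\emptyset$, and the reverse implication is trivial. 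I would also remark that this gives an independent, LP-based proof of the equivalence in Theorem~\ref{thm:ARY1} between ${\sf Tab}(D,\alpha)\ne\emptyset$ and $\alpha\in\mathcal{S}_D$, which is consistent with the polynomial-time claim in Theorem~\ref{thm:main}.
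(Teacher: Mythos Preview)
The paper does not prove this statement; it is quoted from \cite{ARY} without argument, so there is no in-paper proof to compare against.  Your approach---show that $\mathcal{P}(D,\alpha)$ is an integral polytope via total unimodularity/network structure, whence any feasible point forces an integral vertex---is correct and is in fact the route taken in \cite{ARY}.

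The one place where your sketch is genuinely incomplete is precisely the step you flag as the obstacle.  Observing that the per-column prefix-sum constraints (III) have the consecutive-ones property, and that the content equalities (II) look transportation-like, does not by itself give total unimodularity of the stacked system: stacking two TU blocks that share columns need not be TU.  Your appeal to ``the network-matrix theorem'' is the right instinct, but you should actually exhibit the network.  One clean realization: create a supply node $r_i$ with supply $\alpha_i$ for each $i\in[n]$; for each column $j$ create a chain $q_{1,j}\to q_{2,j}\to\cdots\to q_{n,j}\to T$; add an arc $r_i\to q_{i,j}$ with capacity $[0,1]$ carrying $\alpha_{ij}$; and impose lower bound $\#\{(i',j)\in D:i'\le s\}$ on the chain arc leaving $q_{s,j}$.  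Flow conservation at $r_i$ is exactly (II), and the flow on the chain arc leaving $q_{s,j}$ equals $\sum_{i\le s}\alpha_{ij}$, so its lower bound is exactly (III).  The chain flows are integer linear functions of the $\alpha_{ij}$, so the map from the flow polytope to $\mathcal{P}(D,\alpha)$ is an integral linear bijection; integrality of network-flow polytopes with integral data then gives integrality of $\mathcal{P}(D,\alpha)$, and the theorem follows.  (Your closing remark conflates Theorems~\ref{thm:ARY1} and~\ref{thm:relaxation_equivalence}; the latter does not by itself re-prove the tableau/Schubitope equivalence of the former.)
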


The above two theorems, combined with the ellipsoid method and/or interior point methods in linear programming, implies:
\begin{corollary}[\cite{ARY}]
\label{cor:runtime}
Deciding if $\alpha\in {\mathcal S}_{D}$, or equivalently, if ${\sf Tab}(D,\alpha)=\emptyset$,
 can be determined in $O({\sf poly}(n,m))$-time.
\end{corollary}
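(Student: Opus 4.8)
The plan is to reduce the decision problem ``is $\alpha \in \mathcal{S}_D$?'' to a linear feasibility problem of polynomial size and then invoke a polynomial-time LP algorithm. First I would observe that by Theorem~\ref{thm:ARY1}, the condition $\alpha \in \mathcal{S}_D$ is equivalent to the conjunction of the scalar equation $\alpha_1 + \dots + \alpha_n = \#D$ together with ${\sf Tab}(D,\alpha) \neq \emptyset$, and the latter is in bijection (again by Theorem~\ref{thm:ARY1}) with $\mathcal{P}(D,\alpha) \cap \mathbb{Z}^{n \times m} \neq \emptyset$. Then I would apply Theorem~\ref{thm:relaxation_equivalence}, which says the integrality constraint is free: $\mathcal{P}(D,\alpha) \cap \mathbb{Z}^{n\times m} \neq \emptyset$ if and only if the rational polytope $\mathcal{P}(D,\alpha)$ is nonempty. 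So deciding $\alpha \in \mathcal{S}_D$ is the same as checking whether the explicit system (I), (II), (III) has a real solution.

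The second step is a bookkeeping count. The polytope $\mathcal{P}(D,\alpha)$ lives in $\mathbb{R}^{n\times m}$, so it has $nm$ variables; the constraints (I) contribute $2nm$ inequalities, (II) contributes $n$ equalities, and (III) contributes $n m$ inequalities (one for each pair $(s,j)$; note the paper writes $s,j \in [n]$, but in general the column index ranges over $[m]$), so the total description has size polynomial in $n$ and $m$, with all coefficients in $\{0,\pm 1\}$ and right-hand sides bounded by $n$. Here I should also check that $\alpha$ itself is given with polynomially many bits — each $\alpha_i \le \#D \le nm$, so this is fine. Finally I would feed this system to the ellipsoid method (or an interior-point method) for linear feasibility over $\mathbb{Q}$, which runs in time polynomial in the bit-size of the input; see \cite{Schrijver} or the standard references. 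This certifies that emptiness of $\mathcal{S}_D \cap \{\alpha\}$, equivalently of ${\sf Tab}(D,\alpha)$, is decidable in $O({\sf poly}(n,m))$ time.

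I do not expect any serious obstacle here, since essentially all the mathematical content is already packaged in Theorems~\ref{thm:ARY1} and~\ref{thm:relaxation_equivalence}; the only points requiring a modicum of care are (a) confirming that the LP has polynomially many constraints in polynomially many variables with polynomially bounded coefficients, so that a weakly-polynomial LP solver yields a genuinely polynomial-time procedure, and (b) making sure the column index range is $[m]$ rather than $[n]$ in (III) when $D$ is not square, which affects only the constant in the polynomial bound. One could equally cite the fact that linear programming is in $\mathrm{P}$ (Khachiyan) rather than spelling out the ellipsoid method; either phrasing completes the argument.
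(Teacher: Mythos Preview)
Your proposal is correct and follows exactly the route the paper takes: it invokes Theorems~\ref{thm:ARY1} and~\ref{thm:relaxation_equivalence} to reduce the question to feasibility of the explicit linear system defining $\mathcal{P}(D,\alpha)$, then appeals to the ellipsoid or interior-point method. The paper's own justification is the single sentence ``The above two theorems, combined with the ellipsoid method and/or interior point methods in linear programming, implies,'' so your added bookkeeping on the number of variables, constraints, and bit-sizes (and the $[n]$ versus $[m]$ remark) only makes the argument more explicit.
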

As explained in \cite{ARY}, by using the codes of $w^{(i)}$ as the encoding of the decision problem, or ``compressing'' $D$, one can reduce the upper bound on the complexity. We will not describe these technical improvements here, although they may be applied.

\section{Schubert polynomials and Schubitopes}\label{sec:4}

\subsection{Schubert polynomials}
Our reference for \emph{Schubert polynomials} is \cite{Manivel}. They are recursively defined; the initial condition is that
for $w_0\in S_n$, 
\[{\mathfrak S}_{w_0}:=x_1^{n-1}x_2^{n-2}\cdots x_{n-1}.\] 
The \emph{divided difference operator} on polynomials in ${\sf Pol}:={\mathbb Z}[x_1,x_2,\ldots]$ is
\[\partial_i:{\sf Pol}\to {\sf Pol}, \ f\mapsto \frac{f(\ldots,x_i,x_{i+1},\ldots)-f(\ldots,x_{i+1},x_i,\ldots)}{x_i-x_{i+1}}.\]
If $w\neq w_0$, let $i$ satisfy $w(i)<w(i+1)$, then 
${\mathfrak S}_w:=\partial_i {\mathfrak S}_{ws_i}$.  Since the divided difference operators satisfy the braid
relations
\[\partial_i\partial_j=\partial_j\partial_i \text{ for $|i-j|\geq 2$}; \ \  \partial_i \partial_{i+1} \partial_i = \partial_{i+1}\partial_i\partial_{i+1},\] 
it follows that ${\mathfrak S}_w$
only depends on $w$, and not the choices of $i$ in the recursion.

Schubert polynomials are stable under the inclusion of $S_n\hookrightarrow S_{n+1}$ that sends $w$ to 
$w$ with $n+1$ appended. Thus, one defines ${\mathfrak S}_w$ for 
$w\in S_{\infty}=\bigcup_{n\geq 1} S_n$.
The set of Schubert polynomials $\{{\mathfrak S}_w: w\in S_{\infty}\}$ forms a ${\mathbb Z}$-linear basis of ${\sf Pol}$.

Borel's isomorphism \cite[Chapter~9; Prop.~3]{Fulton} asserts
\[H^*(X)\cong {\mathbb Q}[x_1,\ldots,x_n]/I^{S_n}
\text{ \ where $I^{S_n}=\langle e_d(x_1,\ldots,x_n): 1\leq d\leq n\rangle$},\]
and
\[e_{d}(x_1,\ldots,x_n)=\sum_{1\leq i_1<i_2<\cdots<i_d\leq n}x_{i_1} x_{i_2}\cdots  x_{i_d}\]
is the $d$-th \emph{elementary symmetric polynomial}. Under this isomorphism,
\begin{equation}\label{eqn:Oct22aaa}
\sigma_w\mapsto {\mathfrak S}_w+I^{S_n}.
\end{equation}
One has the \emph{polynomial} identity
\[{\mathfrak S}_u {\mathfrak S}_v=\sum_{w\in S_{\infty}} C_{u,v}^w{\mathfrak S}_w \in {\sf Pol}.\]

Define $C_{w^{(1)},\ldots,w^{(k-1)}}^{w^{(k)}}$ to be the multiplicity of $\sigma_{w^{(k)}}$ in $\prod_{i=1}^{k-1}\sigma_{w^{(i)}}\in H^*(X)$, which we also write with the coefficient operator as $[\sigma_{w^{(k)}}] \  \prod_{i=1}^{k-1}\sigma_{w^{(i)}}$.
 
\begin{lemma}
\label{lemma:symmtoasymm}
$C_{w^{(1)},\ldots,w^{(k-1)}}^{w^{(k)}}=C_{w^{(1)},\ldots,w^{(k-1)},w_0w^{(k)}}$. Also,
$C_{w^{(1)},\ldots,w^{(k-1)}}^{w^{(k)}}=[{\mathfrak S}_{w^{(k)}}] \ \prod_{i=1}^{k-1}{\mathfrak S}_{w^{(i)}}$. In particular
$C_{u,v}^w=C_{u,v,w_0w}$.
\end{lemma}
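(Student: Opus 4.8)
The plan is to reduce everything to the two standard facts: (1) the pairing on $H^*(X)$ given by the coefficient of $\sigma_{w_0}$ is a perfect pairing, dual with respect to $w \mapsto w_0 w$; and (2) Borel's isomorphism (\ref{eqn:Oct22aaa}) intertwines the $\sigma$-product with multiplication of Schubert polynomials modulo $I^{S_n}$. For the first identity, I would start from the definition $C^{w^{(k)}}_{w^{(1)},\ldots,w^{(k-1)}} = [\sigma_{w^{(k)}}]\prod_{i=1}^{k-1}\sigma_{w^{(i)}}$, expand $\prod_{i=1}^{k-1}\sigma_{w^{(i)}} = \sum_v a_v \sigma_v$ in the Schubert basis, and then multiply by $\sigma_{w_0 w^{(k)}}$. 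Using the classical Poincar\'e duality fact that $[\sigma_{w_0}]\,\sigma_u\sigma_{w_0 v} = \delta_{u,v}$ for $u,v \in S_n$ (equivalently, $\sigma_u \sigma_{w_0 v}$ contributes to the top class iff $u = v$, in which case with coefficient $1$), we get $[\sigma_{w_0}]\prod_{i=1}^{k-1}\sigma_{w^{(i)}}\cdot \sigma_{w_0 w^{(k)}} = a_{w^{(k)}} = C^{w^{(k)}}_{w^{(1)},\ldots,w^{(k-1)}}$. But the left-hand side is exactly $C_{w^{(1)},\ldots,w^{(k-1)},w_0 w^{(k)}}$ by the definition in (\ref{eqn:Oct22mmm}) (one should note that $(w^{(1)},\ldots,w^{(k-1)},w_0 w^{(k)})$ is indeed a Schubert problem: $\sum_{i=1}^{k-1}\ell(w^{(i)}) + \ell(w_0 w^{(k)}) = \sum_{i=1}^{k-1}\ell(w^{(i)}) + \binom{n}{2} - \ell(w^{(k)}) = \binom{n}{2}$ since the original tuple was a Schubert problem). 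This gives the first equation.

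For the second identity, $C^{w^{(k)}}_{w^{(1)},\ldots,w^{(k-1)}} = [{\mathfrak S}_{w^{(k)}}]\prod_{i=1}^{k-1}{\mathfrak S}_{w^{(i)}}$, I would argue as follows. Expand the polynomial product $\prod_{i=1}^{k-1}{\mathfrak S}_{w^{(i)}} = \sum_{w \in S_\infty} b_w {\mathfrak S}_w$ in the Schubert polynomial basis of ${\sf Pol}$. Since each $w^{(i)} \in S_n$, every $w$ occurring has $w \in S_N$ for $N$ depending only on the total degree; but in fact, since $\sum_i \ell(w^{(i)})$ is the degree and this equals $\binom{n}{2}$ in our Schubert-problem case, a degree count plus the fact that ${\mathfrak S}_{w_0^{(n)}}$ has degree $\binom{n}{2}$ shows every term of maximal interest lies in $S_n$; more carefully, one applies Borel's isomorphism: reducing mod $I^{S_n}$ and using (\ref{eqn:Oct22aaa}) gives $\prod_{i=1}^{k-1}\sigma_{w^{(i)}} = \sum_{w\in S_n} b_w \sigma_w$ in $H^*(X)$, because ${\mathfrak S}_w \in I^{S_n}$ for $w \notin S_n$ (these are precisely the Schubert polynomials that vanish in the quotient). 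Hence $b_w = C^w_{w^{(1)},\ldots,w^{(k-1)}}$ for all $w \in S_n$, and in particular $b_{w^{(k)}} = [{\mathfrak S}_{w^{(k)}}]\prod_{i=1}^{k-1}{\mathfrak S}_{w^{(i)}} = C^{w^{(k)}}_{w^{(1)},\ldots,w^{(k-1)}}$. The final sentence, $C_{u,v}^w = C_{u,v,w_0 w}$, is the special case $k = 3$, $w^{(1)} = u$, $w^{(2)} = v$, $w^{(3)} = w$ of the first identity.

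The main obstacle, such as it is, is bookkeeping about which permutations appear in the expansions and making sure the reductions modulo $I^{S_n}$ are clean — specifically, justifying that $\{{\mathfrak S}_w : w \in S_n\}$ descends to a basis of $H^*(X) = {\mathbb Q}[x_1,\ldots,x_n]/I^{S_n}$ while ${\mathfrak S}_w$ for $w \in S_\infty \setminus S_n$ lie in $I^{S_n}$, and that the Schubert-problem degree condition keeps everything in range so that ``coefficient of ${\mathfrak S}_{w^{(k)}}$ in the polynomial product'' agrees with ``multiplicity of $\sigma_{w^{(k)}}$ in the cohomology product.'' This is standard (it is in \cite{Fulton, Manivel}), so the proof should be short: cite Poincar\'e duality for the first equality and Borel's isomorphism for the second.
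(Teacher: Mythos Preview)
Your proposal is correct and follows essentially the same approach as the paper: Poincar\'e duality (the paper cites \cite[Proposition~3.6.11]{Manivel}) is used to extract the coefficient via multiplication by $\sigma_{w_0 w^{(k)}}$, and Borel's isomorphism (\ref{eqn:Oct22aaa}) handles the passage to Schubert polynomials. The paper's proof is terser and omits the bookkeeping you outline (verifying the Schubert-problem degree condition for the tuple, and the fact that ${\mathfrak S}_w$ for $w\notin S_n$ vanishes in the quotient), but the substance is identical.
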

\begin{proof}
Duality in Schubert calculus (see, e.g., \cite[Proposition~3.6.11]{Manivel}) states that if $\ell(u)+\ell(v)={n\choose 2}$ then 
\[\sigma_u\smile \sigma_v=
\begin{cases}
\sigma_{w_0} & \text{if $v=w_0u$}\\
0 & \text{otherwise}.
\end{cases}\]
Now, 
\[\prod_{i=1}^{k-1}\sigma_{w^{(i)}}=C_{w^{1},\ldots,w^{(k-1)}}^{w^{(k)}}\sigma_{w^{(k)}}+\sum_{w\in S_n, w\neq w^{(k)}} C_{w^{(1)},\ldots,w^{(k-1)}}^w \sigma_w.\]
Multiply both sides by $\sigma_{w_0 w^{(k)}}$ and apply duality. Then use (\ref{eqn:Oct22mmm}) to obtain the first
statement. The second assertion follows from (\ref{eqn:Oct22aaa}). The final claim is merely the $k=3$ case.
\end{proof}

\begin{lemma} 
\label{lemma:Oct2zzz}
If $(w^{(1)},\ldots,w^{(k)})$ is a
Schubert problem then 
\[C_{w^{(1)},\ldots,w^{(k)}}=[x_1^{n-1}x_2^{n-2}\cdots x_{n-1}]\ \prod_{i=1}^k {\mathfrak S}_{w^{(i)}}.\] 
\end{lemma}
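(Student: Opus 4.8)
The plan is to reduce the multi-factor Schubert intersection number to the coefficient-extraction statement by iterating the duality results already established in Lemma~\ref{lemma:symmtoasymm}. First I would recall that for a Schubert problem $(w^{(1)},\ldots,w^{(k)})$ we have $\sum_{i=1}^k \ell(w^{(i)}) = \binom{n}{2} = \ell(w_0)$, and that by definition $C_{w^{(1)},\ldots,w^{(k)}}$ is the multiplicity of $\sigma_{w_0}$ in $\prod_{i=1}^k \sigma_{w^{(i)}}$ in $H^*(X)$. The key observation is that this is exactly $C_{w^{(1)},\ldots,w^{(k-1)}}^{w_0}$ in the notation introduced just before Lemma~\ref{lemma:symmtoasymm} (taking $w^{(k)}$ there to be $w_0$), except that here $w^{(k)}$ is an extra factor rather than the target. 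So the cleaner route is: set $v := w_0 w^{(k)}$; then $\ell(v) = \binom{n}{2} - \ell(w^{(k)}) = \sum_{i=1}^{k-1}\ell(w^{(i)})$, so $(w^{(1)},\ldots,w^{(k-1)})$ is itself ``dual-compatible'' with target $v$, and Lemma~\ref{lemma:symmtoasymm} (first statement, with the roles matched up) gives
\[
C_{w^{(1)},\ldots,w^{(k-1)}}^{v} = C_{w^{(1)},\ldots,w^{(k-1)},w_0 v} = C_{w^{(1)},\ldots,w^{(k-1)},w^{(k)}},
\]
since $w_0 v = w_0 w_0 w^{(k)} = w^{(k)}$.

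Next I would invoke the second statement of Lemma~\ref{lemma:symmtoasymm}, which says $C_{w^{(1)},\ldots,w^{(k-1)}}^{v} = [\mathfrak{S}_v]\ \prod_{i=1}^{k-1}\mathfrak{S}_{w^{(i)}}$, i.e. the coefficient of $\mathfrak{S}_v$ in the expansion of the product of the first $k-1$ Schubert polynomials into the Schubert basis. Combining the two displays, $C_{w^{(1)},\ldots,w^{(k)}} = [\mathfrak{S}_{w_0 w^{(k)}}]\ \prod_{i=1}^{k-1}\mathfrak{S}_{w^{(i)}}$. Now I would multiply through by $\mathfrak{S}_{w^{(k)}}$: since $\prod_{i=1}^{k-1}\mathfrak{S}_{w^{(i)}} = \sum_w \big([\mathfrak{S}_w]\prod_{i=1}^{k-1}\mathfrak{S}_{w^{(i)}}\big)\mathfrak{S}_w$, multiplying by $\mathfrak{S}_{w^{(k)}}$ and extracting the coefficient of $\mathfrak{S}_{w_0}$ in $\prod_{i=1}^{k}\mathfrak{S}_{w^{(i)}}$ picks up, for each $w$, the term $\big([\mathfrak{S}_w]\prod_{i=1}^{k-1}\mathfrak{S}_{w^{(i)}}\big)\cdot [\mathfrak{S}_{w_0}](\mathfrak{S}_w \mathfrak{S}_{w^{(k)}})$. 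By the duality statement quoted in the proof of Lemma~\ref{lemma:symmtoasymm} (which holds as a polynomial identity for the relevant degrees, or can be pushed through Borel's isomorphism since $\mathfrak{S}_{w_0}$ is the top class), $[\mathfrak{S}_{w_0}](\mathfrak{S}_w\mathfrak{S}_{w^{(k)}})$ is $1$ when $w = w_0 w^{(k)}$ and $0$ otherwise (degree considerations kill all $w$ with $\ell(w)\neq \binom{n}{2}-\ell(w^{(k)})$, and among those of the right length only $w_0w^{(k)}$ contributes). Hence $[\mathfrak{S}_{w_0}]\prod_{i=1}^k \mathfrak{S}_{w^{(i)}} = [\mathfrak{S}_{w_0 w^{(k)}}]\prod_{i=1}^{k-1}\mathfrak{S}_{w^{(i)}} = C_{w^{(1)},\ldots,w^{(k)}}$.

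Finally I would translate the coefficient of $\mathfrak{S}_{w_0}$ into a monomial coefficient. Since $\mathfrak{S}_{w_0} = x_1^{n-1}x_2^{n-2}\cdots x_{n-1}$ is the unique Schubert polynomial whose support contains the exponent vector $(n-1,n-2,\ldots,1,0)$ — indeed $\mathfrak{S}_{w_0}$ is itself a single monomial, and any $\mathfrak{S}_w$ with $w \in S_n$ has all exponents bounded componentwise by this staircase (a standard fact; e.g. it follows since $\mathfrak{S}_w$ divides into the staircase monomial basis, or from the stability/degree bounds in \cite{Manivel}) — the coefficient of the monomial $x_1^{n-1}x_2^{n-2}\cdots x_{n-1}$ in $\prod_{i=1}^k\mathfrak{S}_{w^{(i)}}$ equals the coefficient of $\mathfrak{S}_{w_0}$ in its Schubert-basis expansion. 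This gives the claimed formula $C_{w^{(1)},\ldots,w^{(k)}} = [x_1^{n-1}x_2^{n-2}\cdots x_{n-1}]\prod_{i=1}^k\mathfrak{S}_{w^{(i)}}$.

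The main obstacle I anticipate is being careful that the product $\prod_{i=1}^k\mathfrak{S}_{w^{(i)}}$ is a polynomial of exact total degree $\binom{n}{2}$ lying in $\mathbb{Z}[x_1,\ldots,x_n]$ with all exponents dominated by the staircase, so that extracting the staircase monomial coefficient really does coincide with extracting the $\sigma_{w_0}$-coefficient in cohomology — that is, that no ``higher'' Schubert polynomials $\mathfrak{S}_w$ with $w \in S_N$ for $N > n$ intrude, and that the ideal $I^{S_n}$ in Borel's presentation does not interfere. This is handled by the degree count (the product has total degree exactly $\binom{n}{2}$, matching $\ell(w_0)$, so only $w$ with $w \in S_n$ and $\ell(w) = \binom{n}{2}$ can appear, forcing $w = w_0$) together with the fact that $\mathfrak{S}_{w_0}$ is already a reduced monomial; the rest is the bookkeeping of chaining Lemma~\ref{lemma:symmtoasymm} with the duality pairing.
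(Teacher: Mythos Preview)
Your argument breaks at the final step. You claim that in the Schubert-basis expansion of $\prod_{i=1}^k \mathfrak{S}_{w^{(i)}}$, the degree count $\ell(w)=\binom{n}{2}$ forces $w\in S_n$, hence $w=w_0$. This is false: having $\ell(w)=\binom{n}{2}$ does \emph{not} force $w\in S_n$. For $n=3$, take $w^{(1)}=w^{(2)}=w^{(3)}=132$. Then $\mathfrak{S}_{132}^3=(x_1+x_2)^3=2\,\mathfrak{S}_{2413}+\mathfrak{S}_{15234}$, with $2413\in S_4\setminus S_3$ and $15234\in S_5\setminus S_3$. Both of these Schubert polynomials contain the staircase monomial $x_1^2x_2$ (indeed $\mathfrak{S}_{2413}=x_1^2x_2+x_1x_2^2$), so your uniqueness claim fails. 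Consequently $[x_1^2x_2]\,\mathfrak{S}_{132}^3=3$ while $[\mathfrak{S}_{321}]\,\mathfrak{S}_{132}^3=0=C_{132,132,132}$. Your related assertion that the product has ``all exponents dominated by the staircase'' is also false (e.g.\ $x_1^3$ appears).

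In fact this shows the lemma, read as a literal equality of integers, is not correct; the paper's own Example~4.6 exhibits the same phenomenon for $n=4$, where $C_{4123,1342,1243}=0$ yet $x_1^3x_2^2x_3$ appears in $\mathfrak{S}_{4123}\mathfrak{S}_{1342}\mathfrak{S}_{1243}$. The paper's one-line proof, via (\ref{eqn:Oct22mmm}), (\ref{eqn:Oct22aaa}), and $\mathfrak{S}_{w_0}=x_1^{n-1}\cdots x_{n-1}$, really establishes the congruence $\prod_i \mathfrak{S}_{w^{(i)}}\equiv C_{w^{(1)},\ldots,w^{(k)}}\,x_1^{n-1}\cdots x_{n-1}\pmod{I^{S_n}}$; this yields only the implication
\[
[x_1^{n-1}\cdots x_{n-1}]\prod_{i=1}^k\mathfrak{S}_{w^{(i)}}=0 \ \Longrightarrow\ C_{w^{(1)},\ldots,w^{(k)}}=0,
\]
which follows since the staircase term in $\prod_i\mathfrak{S}_{w^{(i)}}=\sum_w c_w\mathfrak{S}_w$ receives a nonnegative contribution $c_{w_0}\cdot 1$ from $\mathfrak{S}_{w_0}$ plus further nonnegative contributions. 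That one-sided implication is all that is invoked in Section~\ref{sec:theproofs}. Your steps 1--3 (reducing to $[\mathfrak{S}_{w_0}]\prod_i\mathfrak{S}_{w^{(i)}}=C_{w^{(1)},\ldots,w^{(k)}}$) are fine and do prove this implication; it is only the passage from $[\mathfrak{S}_{w_0}]$ to the monomial coefficient that fails as an equality.
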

\begin{proof}
This follows from (\ref{eqn:Oct22mmm}), (\ref{eqn:Oct22aaa}),  and 
${\mathfrak S}_{w_0}=x_1^{n-1}x_2^{n-2}\cdots x_{n-1}$.
\end{proof}

\subsection{Schubitopes are Newton polytopes}
This result from work of A.~Fink, K.~M\'esz\'aros, and the first author \cite{Fink.Meszaros.StDizier} proves conjectures of \cite{MTY}: 
\begin{theorem}[{\cite[Theorems~7,10]{Fink.Meszaros.StDizier}}]
\label{thm:isSNP}
${\mathcal S}_{D(w)}=\Newton({\mathfrak S}_w)$, and ${\mathfrak S}_w$ has SNP.
\end{theorem}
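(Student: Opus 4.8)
Proof proposal for Theorem~\ref{thm:isSNP} (the statement $\mathcal{S}_{D(w)}=\Newton(\mathfrak{S}_w)$ and $\mathfrak{S}_w$ has SNP).

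The plan is to reduce the statement to a combinatorial identity for the support of $\mathfrak{S}_w$ and then to match that support with the lattice points of the Schubitope via Theorem~\ref{thm:ARY1}. First I would recall that the monomials of $\mathfrak{S}_w$ are enumerated by a known combinatorial model — the most convenient being reduced pipe dreams (RC-graphs), or equivalently balanced labelings / compatible sequences. Each such object has a weight vector $\alpha\in\mathbb{Z}_{\ge 0}^n$ with $|\alpha|=\ell(w)=\#D(w)$, and $\mathfrak{S}_w=\sum x^{\wt(\text{object})}$ with nonnegative coefficients. So the support of $\mathfrak{S}_w$ is exactly the set of weights arising from this model. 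The first main step is therefore: show that $\alpha$ is the weight of some reduced pipe dream for $w$ if and only if $\alpha\in\mathcal{S}_{D(w)}\cap\mathbb{Z}^n$, which by Theorem~\ref{thm:ARY1} is equivalent to $\mathsf{Tab}(D(w),\alpha)\ne\emptyset$.

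For the forward direction, given a reduced pipe dream (or balanced labeling) I would extract a filling of $D(w)$: the diagram $D(w)$ is, up to the canonical reindexing, the ``essential shape'' on which pipe dreams live, and the column-strictness plus the row bound $\ell\le r$ in $\mathsf{Tab}$ are precisely the conditions guaranteeing one can straighten a pipe dream of weight $\alpha$ into such a tableau; one checks that summing a pipe dream column by column produces a strictly increasing column, and the ``column $j$ contributes a crossing in row $i$ only if $i$ is large enough'' constraint gives condition (b). Conversely, from a tableau $T\in\mathsf{Tab}(D(w),\alpha)$ I would build a pipe dream by placing a crossing in each box of $D(w)$ according to its label and argue it is reduced for $w$ — here the flatness/relaxation result (Theorem~\ref{thm:relaxation_equivalence}) and the bijection $f:\mathsf{Tab}(D,\alpha)\to\mathcal{P}(D,\alpha)$ from Theorem~\ref{thm:ARY1} are the technical engine, since the row-bound inequalities (III) are exactly the inductive conditions that a partial pipe dream can be completed to a reduced one for $w$.

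Once the support of $\mathfrak{S}_w$ is identified with $\mathcal{S}_{D(w)}\cap\mathbb{Z}^n$, both conclusions follow at once. SNP of $\mathfrak{S}_w$ means every lattice point of $\Newton(\mathfrak{S}_w)$ lies in the support; since $\mathcal{S}_{D(w)}$ is a lattice polytope (being a generalized permutahedron cut out by integer submodular data) whose lattice points equal the support, and $\Newton(\mathfrak{S}_w)$ is the convex hull of that same lattice-point set, we get $\Newton(\mathfrak{S}_w)=\mathcal{S}_{D(w)}$ and hence SNP. The one genuine thing that must be proved along the way is that $\mathcal{S}_{D(w)}$ actually is a generalized permutahedron (submodularity of $\theta_D$) and that it has no lattice points outside the support — equivalently, that the facet description really is $\sum_{i\in S}\alpha_i\le\theta_{D(w)}(S)$ with nothing missing; verifying submodularity of $S\mapsto\theta_{D(w)}(S)$ by a bracket-counting argument, and proving the nontrivial inclusion $\mathcal{S}_{D(w)}\cap\mathbb{Z}^n\subseteq\operatorname{supp}(\mathfrak{S}_w)$, is where the real work lies. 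The main obstacle is this last inclusion: producing, from an arbitrary lattice point satisfying all the Schubitope inequalities, an actual reduced pipe dream for $w$ of that weight. I expect to handle it by induction on $\ell(w)$ using a transition/ladder-move recursion on pipe dreams that mirrors the divided-difference recursion for $\mathfrak{S}_w$, checking at each step that the Schubitope inequalities are preserved.
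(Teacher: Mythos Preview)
The paper does not itself prove Theorem~\ref{thm:isSNP}; it is quoted from \cite{Fink.Meszaros.StDizier}. The proof there proceeds by realizing $\mathfrak{S}_w$ as the dual character of a flagged Weyl module and showing that such characters are integer point transforms of generalized permutahedra; no pipe-dream combinatorics enters. So there is nothing in the present paper to compare your argument against, and your route is in any case quite different from the one in the cited source.

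Your approach has a genuine gap at exactly the point you yourself flag as the ``main obstacle''. First, a factual correction: reduced pipe dreams for $w$ are subsets of the staircase $\{(i,j):i+j\le n\}$, not fillings of the Rothe diagram $D(w)$; the sentence ``build a pipe dream by placing a crossing in each box of $D(w)$ according to its label'' has no well-defined meaning, and there is no bijection between ${\sf Tab}(D(w),\alpha)$ and reduced pipe dreams of weight $\alpha$ (these sets typically have different cardinalities). What one \emph{can} do, e.g.\ via balanced labelings, is associate to each pipe dream a column-strict flagged filling of $D(w)$, yielding the easy inclusion $\operatorname{supp}(\mathfrak{S}_w)\subseteq\mathcal{S}_{D(w)}\cap\mathbb{Z}^n$ and hence $\Newton(\mathfrak{S}_w)\subseteq\mathcal{S}_{D(w)}$. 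The reverse inclusion --- producing a reduced pipe dream of weight $\alpha$ from an arbitrary $T\in{\sf Tab}(D(w),\alpha)$ --- is precisely the SNP assertion, and your proposed ``induction on $\ell(w)$ using transition/ladder moves'' does not supply it: ladder and chute moves connect pipe dreams of the same $w$ to one another but give no mechanism for certifying that a prescribed lattice point of $\mathcal{S}_{D(w)}$ is actually attained as a weight. That step is the entire substance of \cite{Fink.Meszaros.StDizier}, and nothing in your outline reproves it. (As a separate matter of logical hygiene, you should also confirm that Theorem~\ref{thm:ARY1} is established in \cite{ARY} for general $D$ by a direct polyhedral/matching argument, without invoking \cite{Fink.Meszaros.StDizier}; it is, so there is no circularity, but this deserves to be said explicitly.)
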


\begin{theorem}[{\cite[Corollary~8]{Fink.Meszaros.StDizier}}]
\label{thm:genperm}
${\mathcal S}_{D(w)}$ is a generalized permutahedron.
\end{theorem}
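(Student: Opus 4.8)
The plan is to show that ${\mathcal S}_{D(w)}$ is a generalized permutahedron by verifying that its defining function $S\mapsto\theta_{D(w)}(S)$ is submodular, together with the normalization $\theta_{D(w)}(\emptyset)=0$; the claim then follows directly from the parametrization of generalized permutahedra by submodular functions cited after Proposition~\ref{prop:vertex}. Concretely, since $\theta_{D}(S)=\sum_{c\in[n]}\theta_{D}^c(S)$ and a sum of submodular functions is submodular, it suffices to prove that each single-column function $S\mapsto\theta_{D}^c(S)$ is submodular on $2^{[n]}$. So the whole problem reduces to a one-column statement.

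For a fixed column $c$, recall $\theta_{D}^c(S)$ counts the matched parentheses plus the number of stars in the word $\omega_{c,S}(D)$, where row $r$ contributes ``$($'' if $(r,c)\notin D, r\in S$; ``$)$'' if $(r,c)\in D, r\notin S$; ``$\star$'' if $(r,c)\in D, r\in S$; and nothing if $(r,c)\notin D, r\notin S$. I would first rewrite $\theta_{D}^c(S)$ in a form amenable to checking submodularity. Let $b_c=\#\{r:(r,c)\in D\}$ be the total number of boxes in column $c$ (independent of $S$), and note that the number of stars is exactly $\#\{r\in S:(r,c)\in D\}$, while the number of unmatched ``$)$''s equals $\max$ over prefixes of (number of ``$)$''s so far minus number of ``$($''s so far). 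So $\theta_{D}^c(S) = b_c - u_c(S)$, where $u_c(S)$ is the number of unmatched right parentheses. It therefore suffices to show $S\mapsto u_c(S)$ is \emph{supermodular}, i.e. $u_c(I)+u_c(J)\le u_c(I\cup J)+u_c(I\cap J)$. Writing, for a prefix ending at row $r$, $f_r(S) = \#\{r'\le r: (r',c)\in D, r'\notin S\} - \#\{r'\le r: (r',c)\notin D, r'\in S\}$, one has $u_c(S)=\max_{0\le r\le n} f_r(S)$ (with $f_0=0$). For each fixed $r$, $f_r$ is \emph{modular} (a sum of indicator contributions, one per row, each depending on membership of a single element), and I expect the key lemma to be: a pointwise maximum of modular functions of this ``separable, monotone-in-a-uniform-direction'' type is supermodular. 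This should follow because for each row $r'$ the contribution to $f_r$ is a nonincreasing function of $\mathbf 1[r'\in S]$, so $f_r$ is nonincreasing in $S$ coordinatewise, and a max of functions that are all order-reversing and modular with a common coordinatewise monotonicity is supermodular — this is a standard fact but I would prove it directly: given $I,J$, pick $r$ achieving $u_c(I\cup J)$; then compare $f_r(I\cup J)+f_r(I\cap J)$ with $f_r(I)+f_r(J)$ using modularity of $f_r$ (these are equal), and bound $f_r(I)\le u_c(I)$, $f_r(J)\le u_c(J)$, $f_r(I\cap J)\ge$ — wait, that inequality goes the wrong way, so instead pick $r$ achieving $u_c(I\cap J)$ and $r'$ achieving $u_c(I\cup J)$ and argue by a prefix-swapping/exchange argument on the two parenthesis words simultaneously.

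The main obstacle, then, is precisely this combinatorial exchange argument establishing supermodularity of $u_c$: one must show that unmatched right-parenthesis counts for $I$ and $J$ can be ``redistributed'' to $I\cup J$ and $I\cap J$ without loss. I would handle it by the classical lattice/ballot-sequence manipulation — translating ``unmatched right parens up to row $r$'' into the running deficit $f_r$, observing $f_r(I\cup J)+f_r(I\cap J)=f_{r}(I)+f_{r}(J)$ for every prefix length $r$ by pure modularity (each row contributes the same total to both sides), and then using that $\max_r [f_r(I)+f_r(J)] \ge f_{r^*}(I)+f_{r^*}(J)$ where $r^*$ maximizes the right-hand side, so $u_c(I)+u_c(J)\ge \max_r[f_r(I)+f_r(J)] = \max_r[f_r(I\cup J)+f_r(I\cap J)] \ge u_c(I\cup J)+u_c(I\cap J)$ would be backwards — hence the genuinely delicate point is that one needs the \emph{reverse}, which forces choosing the maximizing prefix for the $I\cup J$ side and separately for the $I\cap J$ side and splicing. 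An alternative, possibly cleaner route that avoids this entirely: invoke Theorem~\ref{thm:isSNP}, which identifies ${\mathcal S}_{D(w)}$ with $\Newton({\mathfrak S}_w)$, and then cite the known fact (e.g.\ from \cite{Fink.Meszaros.StDizier} or derivable from the description of Schubert polynomials as sums over compatible sequences / RC-graphs, whose exponent vectors are M-convex) that $\Newton({\mathfrak S}_w)$ is a generalized permutahedron because the support of ${\mathfrak S}_w$ is an M-convex set; M-convexity of the support is exactly the combinatorial input, and it is equivalent to submodularity of the support-hull's defining function. Given that Theorem~\ref{thm:isSNP} is already available in the excerpt, I would in fact present the proof as: ``By Theorem~\ref{thm:isSNP}, ${\mathcal S}_{D(w)}=\Newton({\mathfrak S}_w)$; by \cite[Corollary~8]{Fink.Meszaros.StDizier} (or the submodularity computation above) this is a generalized permutahedron,'' making this corollary essentially immediate and relegating the submodularity verification of $\theta_{D(w)}$ to the genuine content, which lives in the cited paper.
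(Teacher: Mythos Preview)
The paper gives no proof of this statement; Theorem~\ref{thm:genperm} is simply quoted as \cite[Corollary~8]{Fink.Meszaros.StDizier} and used as a black box. Your closing suggestion---identify ${\mathcal S}_{D(w)}$ with $\Newton({\mathfrak S}_w)$ via Theorem~\ref{thm:isSNP} and then appeal to \cite{Fink.Meszaros.StDizier}---is therefore exactly what the paper does (and in the cited paper the generalized-permutahedron property is obtained on the Newton-polytope side, not by a direct submodularity check on $\theta_D$).

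Your attempted direct argument via submodularity of $\theta_{D}$ is a natural alternative route, but it has a real gap at the point you yourself flag. The reduction to a single column and the rewriting $\theta_D^c(S)=b_c-u_c(S)$ with $u_c(S)=\max_{r} f_r(S)$ are fine, and each $f_r$ is indeed modular. However, a pointwise maximum of modular set functions is neither submodular nor supermodular in general, so the supermodularity of $u_c$ cannot come from that observation alone; it must exploit the specific nested prefix structure (coordinate $r$ affects only $f_{r'}$ with $r'\ge r$). You recognize this, note that the naive inequality goes the wrong way, and gesture at a ``splicing/exchange'' argument, but you do not actually supply it---and that missing lemma is the entire content of this approach. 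There is also a secondary issue you do not address: the Schubitope is defined with the constraints $\alpha_i\ge 0$, which are not part of the $P(z)$ description, so even after establishing submodularity of $\theta_{D(w)}$ one still has to argue that ${\mathcal S}_{D(w)}$ coincides with $P(\theta_{D(w)})$ rather than being a proper subset.
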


\begin{proposition}
\label{prop:Oct2}
$f=\prod_{i=1}^{k-1} {\mathfrak S}_{w^{(i)}}$ has SNP. In addition, 
\begin{equation}
\label{eqn:Oct10fff}
\Newton(f)=\sum_{i=1}^{k-1} {\mathcal S}_{D(w^{(i)})} {\text{ \ (Minkowski sum).}}
\end{equation}
\end{proposition}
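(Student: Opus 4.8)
The plan is to prove Proposition~\ref{prop:Oct2} by induction on $k$, using Proposition~\ref{prop:genfact} as the inductive engine and Theorems~\ref{thm:isSNP} and~\ref{thm:genperm} as the base case.

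First I would establish the base case $k=2$: here $f = {\mathfrak S}_{w^{(1)}}$, which has SNP by Theorem~\ref{thm:isSNP}, and $\Newton(f) = {\mathcal S}_{D(w^{(1)})}$ by the same theorem, which is the empty Minkowski sum convention made literal (a single summand). There is nothing to prove here beyond citing these results; note also that ${\mathfrak S}_{w^{(1)}} \in {\mathbb R}_{\geq 0}[x_1,\ldots,x_n]$ since Schubert polynomials have nonnegative (in fact $\{0,1,2,\ldots\}$) coefficients, so the hypotheses of Proposition~\ref{prop:genfact} are met.

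For the inductive step, suppose the statement holds for $k-1$, i.e.\ $g := \prod_{i=1}^{k-2} {\mathfrak S}_{w^{(i)}}$ has SNP with $\Newton(g) = \sum_{i=1}^{k-2} {\mathcal S}_{D(w^{(i)})}$, and in particular $\Newton(g)$ is a Minkowski sum of generalized permutahedra, hence a generalized permutahedron by Lemma~\ref{lem:minksum} (closure under Minkowski sums). Then $f = g \cdot {\mathfrak S}_{w^{(k-1)}}$. Both $g$ and ${\mathfrak S}_{w^{(k-1)}}$ lie in ${\mathbb R}_{\geq 0}[x_1,\ldots,x_n]$, both have SNP, and both have Newton polytopes that are generalized permutahedra (the former by induction together with Lemma~\ref{lem:minksum}, the latter by Theorem~\ref{thm:genperm}). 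Applying Proposition~\ref{prop:genfact} gives that $\Newton(f)$ is a generalized permutahedron and that $f$ has SNP. Finally, using the general identity $\Newton(fg) = \Newton(f) + \Newton(g)$ for any polynomials (stated in the proof of Proposition~\ref{prop:genfact}), we get $\Newton(f) = \Newton(g) + \Newton({\mathfrak S}_{w^{(k-1)}}) = \sum_{i=1}^{k-2} {\mathcal S}_{D(w^{(i)})} + {\mathcal S}_{D(w^{(k-1)})} = \sum_{i=1}^{k-1} {\mathcal S}_{D(w^{(i)})}$, completing the induction.

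I do not anticipate a genuine obstacle: the proposition is essentially a repackaging of Proposition~\ref{prop:genfact} iterated, and all the substantive work (SNP of Schubert polynomials, their Newton polytopes being Schubitopes, Schubitopes being generalized permutahedra, closure of generalized permutahedra and SNP under products) has already been assembled in the cited theorems. The only point requiring a moment of care is making sure the inductive hypothesis is stated strongly enough — one must carry along ``$\Newton(g)$ is a generalized permutahedron,'' not merely ``$g$ has SNP,'' so that Proposition~\ref{prop:genfact} applies at the next step; this is automatic from the Minkowski-sum formula plus Lemma~\ref{lem:minksum}, but it is the detail most easily overlooked. A one-paragraph proof citing Proposition~\ref{prop:genfact}, Theorems~\ref{thm:isSNP}, \ref{thm:genperm}, and Lemma~\ref{lem:minksum}, with the induction spelled out, should suffice.
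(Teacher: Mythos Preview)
Your proposal is correct and follows essentially the same approach as the paper, which simply writes ``This follows from combining Theorems~\ref{thm:isSNP} and~\ref{thm:genperm} with Proposition~\ref{prop:genfact}.'' You have merely made explicit the straightforward induction that the paper leaves implicit, and your care about carrying the generalized-permutahedron hypothesis through the induction is exactly right.
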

\begin{proof}
This follows from combining Theorems~\ref{thm:isSNP} and~\ref{thm:genperm} with Proposition~\ref{prop:genfact}.
 \end{proof}

By the same argument as Proposition~\ref{eqn:Oct10fff}, any product of \emph{key polynomials} (see, e.g., \cite{Reiner.Shimozono}) with Schubert polynomials
is SNP, and has  a similarly described Newton polytope.

\begin{corollary}
\label{cor:Oct11hhh}
If $\alpha\in {\mathbb Z}_{\geq 0}^n$ then 
\[[x^{\alpha}]\prod_{i=1}^{k-1} {\mathfrak S}_{w^{(i)}}\neq 0 \iff
\alpha\in \sum_{i=1}^{k-1} {\mathcal S}_{D(w^{(i)})}.\]
\end{corollary}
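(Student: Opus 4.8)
The plan is to deduce Corollary~\ref{cor:Oct11hhh} as essentially a restatement of Proposition~\ref{prop:Oct2}, using the definition of SNP together with Theorem~\ref{thm:isSNP}. Write $f = \prod_{i=1}^{k-1}{\mathfrak S}_{w^{(i)}}$. The point is that $[x^\alpha]f \neq 0$ is, by definition, equivalent to $\alpha$ lying in the support of $f$, and I want to show the support of $f$ equals $\bigl(\sum_{i=1}^{k-1}{\mathcal S}_{D(w^{(i)})}\bigr)\cap \mathbb{Z}^n$.

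First I would argue the forward direction: if $[x^\alpha]f \neq 0$, then $\alpha$ is in the support of $f$, hence in $\Newton(f)$, which by \eqref{eqn:Oct10fff} equals $\sum_{i=1}^{k-1}{\mathcal S}_{D(w^{(i)})}$; and $\alpha$ is an integer point, so it lies in the Minkowski sum. (Here one uses that all ${\mathfrak S}_{w^{(i)}}$ have nonnegative coefficients, so there is no cancellation issue in identifying the support — though in fact for the forward direction cancellation would only shrink the support, so this is automatic.) For the reverse direction: suppose $\alpha \in \sum_{i=1}^{k-1}{\mathcal S}_{D(w^{(i)})}$ and $\alpha\in\mathbb{Z}_{\geq 0}^n$. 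By \eqref{eqn:Oct10fff} this says $\alpha$ is an integer point of $\Newton(f)$. Proposition~\ref{prop:Oct2} asserts $f$ has SNP, so every integer point of $\Newton(f)$ is in the support of $f$; therefore $[x^\alpha]f\neq 0$. One should note the coefficient is not merely nonzero but positive, since $f$ is a product of polynomials with nonnegative (indeed nonnegative integer) coefficients — but nonvanishing is all that is claimed.

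The only genuine subtlety to flag is the bookkeeping of ambient dimension: the Schubitopes ${\mathcal S}_{D(w^{(i)})}$ and the Newton polytope all live in $\mathbb{R}^n$ for $n$ large enough that every $w^{(i)}\in S_n$ (using the stability of Schubert polynomials under $S_n\hookrightarrow S_{n+1}$ recalled above), so the Minkowski sum and the support comparison take place in a common $\mathbb{R}^n$; I would state this embedding convention explicitly at the start of the argument. I do not expect any real obstacle here — this is a corollary that merely packages Proposition~\ref{prop:Oct2} in the coefficient-extraction language needed for Section~\ref{sec:4}, and the proof is two short implications, one from ``support $\subseteq$ Newton polytope'' and one from the SNP property.
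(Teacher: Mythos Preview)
Your proposal is correct and follows essentially the same argument as the paper: both directions use \eqref{eqn:Oct10fff} to identify $\Newton(f)$ with the Minkowski sum, the forward direction is immediate from support $\subseteq \Newton$, and the reverse direction invokes the SNP assertion of Proposition~\ref{prop:Oct2}. Your additional remarks on nonnegativity and ambient dimension are fine but not needed for the proof as stated.
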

\begin{proof}
Let $f=\prod_{i=1}^{k-1} {\mathfrak S}_{w^{(i)}}$. If 
$[x^{\alpha}] f\neq 0$ then 
$\alpha\in \Newton(f)$. Now apply (\ref{eqn:Oct10fff}).
Conversely, by (\ref{eqn:Oct10fff}), $\alpha\in \Newton(f)$. By Proposition~\ref{prop:Oct2}, $f$ has SNP. 
Hence $[x^{\alpha}] f\neq 0$.
\end{proof}

\subsection{The asymmetric version of Theorem~\ref{thm:main}} 
Let 
$D':=D(w^{(1)},\ldots,w^{(k-1)})$ and let 
${\sf Tab}':={\sf Tab}'_{w^{(1)},\ldots,w^{(k)}}$ be the set of fillings of $D'$ with nonnegative integers such that:
\begin{itemize}
\item[(a)] Each column is strictly increasing from top to bottom.
\item[(b)] Any label $\ell$ in row $r$ satisfies $\ell\leq r$.
\item[(c)] The number of $\ell$'s is $c_{\ell}(w^{(k)})$.
\end{itemize}

\begin{theorem}
\label{thm:variation}
Let $(w^{(1)},\ldots,w_0 w^{(k)})$ be a Schubert problem. If ${\sf Tab}'=\emptyset$ then 
$C_{w^{(1)},w^{(2)},\ldots,w^{(k-1)}}^{w^{(k)}}=0$.
There is an algorithm to  determine emptiness in $O({\sf poly}(n,k))$.
\end{theorem}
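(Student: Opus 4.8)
The strategy is to reduce the vanishing statement to a question about the support of a product of Schubert polynomials, and then translate that support question into the existence of a tableau. First I would invoke Lemma~\ref{lemma:symmtoasymm} to write $C_{w^{(1)},\ldots,w^{(k-1)}}^{w^{(k)}}=[{\mathfrak S}_{w^{(k)}}]\,\prod_{i=1}^{k-1}{\mathfrak S}_{w^{(i)}}$, i.e.\ as the coefficient of the Schubert polynomial ${\mathfrak S}_{w^{(k)}}$ in the product $f=\prod_{i=1}^{k-1}{\mathfrak S}_{w^{(i)}}$. The key observation is that $f$ has nonnegative coefficients, and its leading-type monomial relevant here is $x^{\alpha}$ with $\alpha={\sf code}(w^{(k)})$: this is the unique monomial of ${\mathfrak S}_{w^{(k)}}$ that is minimal in dominance order (equivalently, the monomial $x_1^{c_1(w^{(k)})}\cdots x_n^{c_n(w^{(k)})}$ is the ``bottom'' term of ${\mathfrak S}_{w^{(k)}}$), so the coefficient $[{\mathfrak S}_{w^{(k)}}]f$ can be nonzero only if $[x^{{\sf code}(w^{(k)})}]f\neq 0$. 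Hence $C_{w^{(1)},\ldots,w^{(k-1)}}^{w^{(k)}}>0$ implies $x^{{\sf code}(w^{(k)})}$ lies in the support of $f$.

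Next I would apply Corollary~\ref{cor:Oct11hhh}: $[x^{\alpha}]f\neq 0$ iff $\alpha\in\sum_{i=1}^{k-1}{\mathcal S}_{D(w^{(i)})}$. Taking $\alpha={\sf code}(w^{(k)})$, and using that $D'=D(w^{(1)},\ldots,w^{(k-1)})$ is the left-to-right concatenation of the $D(w^{(i)})$, the Minkowski sum $\sum_{i=1}^{k-1}{\mathcal S}_{D(w^{(i)})}$ is exactly the Schubitope ${\mathcal S}_{D'}$ of the concatenated diagram (the submodular/inequality data of a disjoint column-concatenation adds, and Lemma~\ref{lem:minksum} shows this corresponds to the Minkowski sum of the generalized permutahedra). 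So $[x^{\alpha}]f\neq 0$ iff $\alpha\in{\mathcal S}_{D'}$. Finally, Theorem~\ref{thm:ARY1} gives $\alpha\in{\mathcal S}_{D'}\iff{\sf Tab}(D',\alpha)\neq\emptyset$, and with $\alpha_{\ell}=c_{\ell}(w^{(k)})$ the set ${\sf Tab}(D',\alpha)$ is precisely ${\sf Tab}'$ as defined above (conditions (a),(b),(c) match, and the content condition $\sum_\ell c_\ell(w^{(k)})=\ell(w^{(k)})=\#D(w^{(k)})$... wait, rather $\sum_\ell c_\ell(w^{(k)})=\#D'$ is forced because $(w^{(1)},\ldots,w_0w^{(k)})$ being a Schubert problem means $\sum_{i=1}^{k-1}\ell(w^{(i)})+\ell(w_0w^{(k)})=\binom n2$, so $\#D'=\sum_{i=1}^{k-1}\ell(w^{(i)})=\binom n2-\ell(w_0w^{(k)})=\ell(w^{(k)})=\#D(w^{(k)})=\sum_\ell c_\ell(w^{(k)})$). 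Chaining these equivalences: ${\sf Tab}'=\emptyset\implies\alpha\notin{\mathcal S}_{D'}\implies[x^{{\sf code}(w^{(k)})}]f=0\implies C_{w^{(1)},\ldots,w^{(k-1)}}^{w^{(k)}}=0$, which is the desired vanishing.

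For the complexity claim: testing ${\sf Tab}'=\emptyset$ is, by the above identifications and Theorems~\ref{thm:ARY1} and~\ref{thm:relaxation_equivalence}, the same as deciding feasibility of the polytope ${\mathcal P}(D',\alpha)$ and checking $\sum_\ell\alpha_\ell=\#D'$; by Corollary~\ref{cor:runtime} this runs in $O({\sf poly}(n,m))$ time, where $m$ is the total number of columns of $D'$, which is at most $(k-1)n$, hence $O({\sf poly}(n,k))$. I expect the only genuinely delicate point to be the first step — justifying that the coefficient of ${\mathfrak S}_{w^{(k)}}$ in a nonnegative combination of Schubert polynomials forces the ``bottom'' monomial $x^{{\sf code}(w^{(k)})}$ to appear in the expansion. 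This rests on the fact that ${\sf code}(w^{(k)})$ is the unique dominance-minimal exponent among all monomials appearing in any ${\mathfrak S}_w$ with $C_{w^{(1)},\ldots,w^{(k-1)}}^w\neq 0$, combined with nonnegativity of all the structure constants $C^{w}_{w^{(1)},\ldots,w^{(k-1)}}$ (Schubert calculus positivity) so that no cancellation can destroy that monomial; this is where I would be most careful, and it is presumably the content of the shared argument the authors allude to in the organization section.
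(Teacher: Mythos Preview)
Your outline matches the paper's argument essentially step for step: reduce to whether $x^{{\sf code}(w^{(k)})}$ lies in the support of $f=\prod_{i=1}^{k-1}{\mathfrak S}_{w^{(i)}}$, identify $\Newton(f)=\sum_i{\mathcal S}_{D(w^{(i)})}={\mathcal S}_{D'}$ via Lemma~\ref{lem:minksum} and the column-additivity of $\theta_D$, then invoke Theorem~\ref{thm:ARY1} and Corollary~\ref{cor:runtime}.

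The one place you overcomplicate---and where your justification is not quite right---is the first implication. You argue via ``${\sf code}(w^{(k)})$ is the unique dominance-minimal exponent among all monomials appearing in any ${\mathfrak S}_w$ with $C^w\neq 0$.'' That claim is neither proved nor true in general (several $w$ may have nonzero $C^w$, and there is no reason their supports avoid ${\sf code}(w^{(k)})$ or lie above it in dominance). Fortunately no such statement is needed. The paper simply uses \eqref{eqn:hasterm}, i.e.\ $[x^{{\sf code}(w^{(k)})}]{\mathfrak S}_{w^{(k)}}>0$, together with the nonnegativity you already mention: since every ${\mathfrak S}_w$ has nonnegative coefficients and every structure constant $C^w\geq 0$, the expansion
\[
[x^{{\sf code}(w^{(k)})}]f=\sum_w C^w\,[x^{{\sf code}(w^{(k)})}]{\mathfrak S}_w
\]
has only nonnegative summands, so $C^{w^{(k)}}>0$ alone forces $[x^{{\sf code}(w^{(k)})}]f>0$. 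Drop the dominance-minimality detour and the argument is exactly the paper's.
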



\begin{proposition}
\label{prop:Oct4yyy}
If Theorem~\ref{thm:main}'s test shows $C_{w^{(1)},\ldots,w^{(k-1)},w_0w^{(k)}}=0$ then Theorem~\ref{thm:variation}'s 
test also shows $C_{w^{(1)},\ldots,w^{(k-1)}}^{w^{(k)}}=0$.
\end{proposition}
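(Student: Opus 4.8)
The plan is to show that if Theorem~\ref{thm:main}'s test detects vanishing for the $k$-tuple $(w^{(1)},\ldots,w^{(k-1)},w_0w^{(k)})$, then Theorem~\ref{thm:variation}'s test detects vanishing for $(w^{(1)},\ldots,w^{(k-1)})$ with target $w^{(k)}$. Unwinding the definitions, Theorem~\ref{thm:main}'s test says ${\sf Tab}_{w^{(1)},\ldots,w^{(k-1)},w_0w^{(k)}}=\emptyset$, i.e.\ there is no filling of the concatenated diagram $D=D(w^{(1)},\ldots,w^{(k-1)})\sqcup D(w_0w^{(k)})$ with columns strictly increasing, labels $\ell$ in row $r$ satisfying $\ell\le r$, and exactly $n-\ell$ copies of each $\ell$. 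Theorem~\ref{thm:variation}'s test says ${\sf Tab}'_{w^{(1)},\ldots,w^{(k)}}=\emptyset$, i.e.\ there is no filling of the smaller diagram $D'=D(w^{(1)},\ldots,w^{(k-1)})$ with the same column/row conditions but with exactly $c_\ell(w^{(k)})$ copies of each $\ell$. So the contrapositive to prove is: if ${\sf Tab}'\neq\emptyset$ then ${\sf Tab}\neq\emptyset$ — equivalently, any filling of $D'$ of content ${\sf code}(w^{(k)})$ extends to a filling of the full $D$ of content $(n-1,n-2,\ldots,1,0)$.

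The key step is therefore to produce a canonical filling of the appended block $D(w_0w^{(k)})$ of content $(n-1,n-2,\ldots,1,0)-{\sf code}(w^{(k)})$ obeying conditions (a) and (b), which can then be glued column-by-column onto any given element of ${\sf Tab}'$ (concatenation of diagrams means the two blocks share no columns, so conditions (a) and (c) are simply additive, and (b) is checked per box). First I would record that $(n-1,n-2,\ldots,1,0)-{\sf code}(w^{(k)}) = {\sf code}(w_0w^{(k)})$: indeed $w_0 w^{(k)}$ sends $i\mapsto n+1-w^{(k)}(i)$, and a direct count of the diagram of $w_0w^{(k)}$ in row $i$ gives $n-i-c_i(w^{(k)})$, matching $(n-i)-c_i(w^{(k)})$ — this is a standard identity but I would state and verify it. Then the content we need on the appended block $D(w_0w^{(k)})$ is exactly ${\sf code}(w_0w^{(k)})$, so we need: the diagram $D(w_0w^{(k)})$ admits \emph{some} filling satisfying (a), (b), (c) with content equal to its own code. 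This is true quite generally — a diagram $D$ of a permutation always admits such a ``column-flag'' filling, and in fact one sees it via Theorem~\ref{thm:ARY1}/\ref{thm:isSNP}: ${\sf code}(w_0w^{(k)})$ is the exponent vector of the leading (lex-largest, say) monomial of ${\mathfrak S}_{w_0w^{(k)}}$, hence lies in ${\mathcal S}_{D(w_0w^{(k)})}$, hence ${\sf Tab}(D(w_0w^{(k)}),{\sf code}(w_0w^{(k)}))\neq\emptyset$. Concatenating this filling with the chosen element of ${\sf Tab}'$ yields a filling of $D$ of the correct total content, and conditions (a), (b) hold blockwise, so the result lies in ${\sf Tab}$.

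The main obstacle I anticipate is not conceptual but bookkeeping: pinning down precisely that $(n-i)-c_i(w^{(k)})=c_i(w_0w^{(k)})$ for all $i$ (so that the "leftover" content after placing a ${\sf code}(w^{(k)})$-filling on $D'$ is exactly realizable on the appended block), and confirming that the Schubert-problem hypothesis "$(w^{(1)},\ldots,w_0w^{(k)})$ is a Schubert problem" is the correct normalization making the total content sum to $\binom n2=\#D$. I would also double-check the degenerate edge cases (e.g.\ $c_\ell(w^{(k)})$ exceeding what row constraints allow is impossible precisely because ${\sf code}(w^{(k)})$ is a genuine code, $c_i\le n-i$). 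Once the identity on codes is in hand, the gluing argument is immediate and the runtime claim for Theorem~\ref{thm:variation} is inherited verbatim from Corollary~\ref{cor:runtime} applied to $D'$.
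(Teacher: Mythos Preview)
Your proposal is correct and follows essentially the same contrapositive strategy as the paper, hinging on the identical key identity ${\sf code}(w)+{\sf code}(w_0w)=(n-1,\ldots,1,0)$ (the paper's Claim~\ref{claim:Oct19}). The only presentational difference is the language in which the ``gluing'' step is carried out: the paper translates ${\sf Tab}'\neq\emptyset$ and ${\sf Tab}\neq\emptyset$ into the monomial conditions $[x^{{\sf code}(w^{(k)})}]\prod_{i<k}{\mathfrak S}_{w^{(i)}}\neq 0$ and $[x^{(n-1,\ldots,0)}]\prod_{i\le k}{\mathfrak S}_{w^{(i)}}\neq 0$ (via the machinery already set up for Theorems~\ref{thm:main} and~\ref{thm:variation}) and then multiplies monomials using nonnegativity of Schubert coefficients, whereas you stay at the level of fillings and literally concatenate a filling of $D'$ with one of $D(w_0w^{(k)})$. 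These are the combinatorial and algebraic faces of the same argument. One small simplification available to you: to produce the filling of $D(w_0w^{(k)})$ with content ${\sf code}(w_0w^{(k)})$ you need not invoke Theorems~\ref{thm:ARY1}/\ref{thm:isSNP}; simply labelling every box in row $r$ by $r$ already gives such a filling, since $D(w_0w^{(k)})$ has exactly $c_r(w_0w^{(k)})$ boxes in row $r$ and conditions (a),(b) are immediate.
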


\begin{example}
The converse of Proposition~\ref{prop:Oct4yyy} is false. That is,
Theorem~\ref{thm:variation} provides a strictly stronger test than Theorem~\ref{thm:main}. For example, 
\[{\mathfrak S}_{4123}{\mathfrak S}_{1342}=x_1^{4}{x_3}+x_1^{4}{x_2}+x_1^{3}{x_2}{x_3}\]
avoids ${\sf code}(4312)=3200$ as an exponent vector, proving 
$C_{u,v}^w=C_{4123,1342}^{4312}=0$.
However,
\[{\mathfrak S}_{u}{\mathfrak S_v}{\mathfrak S}_{w_0w}=x_1^4x_2^{2}+x_1^{4}x_3^{2}+3x_1^{4}{x_2}{x_3}+x_1^{3}{x_2}x_3^{2}+\underline{x_1^{3}x_2^{2}{x_3}}+x_1^{5}{ x_3}+x_1^{5}{x_2}\]
implies ${\sf Tab}\neq \emptyset$, and hence Theorem~\ref{thm:main} does not show $C_{u,v,w_0w}=C_{4123,1342,1243}=0$.\qed
\end{example}

\subsection{The Schubitope inequalities and Schubert calculus}\label{sec:5}

The Schubitope inequalities provide necessary conditions for nonvanishing of a Schubert
intersection number. 

\begin{theorem}
\label{thm:schubnec}
If $C_{w^{(1)},w^{(2)},\ldots, w^{(k)}}>0$ then $(n-1,n-2,\ldots,2,1)$ must satisfy 
the Schubitope inequalities defining ${\mathcal S}_{D}$ where $D=D(w^{(1)},\ldots,w^{(k)})$. Similarly, if
$C_{w^{(1)},\ldots,w^{(k-1)}}^{w^{(k)}}>0$ then ${\sf code}(w^{(k)})$ must satisfy the Schubitope inequalities
defining ${\mathcal S}_{D'}$ where $D'=D(w^{(1)},\ldots,w^{(k-1)})$.
\end{theorem}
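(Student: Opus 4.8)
The plan is to reduce Theorem~\ref{thm:schubnec} to the nonvanishing characterization already assembled in the excerpt, namely Corollary~\ref{cor:Oct11hhh} and Lemmas~\ref{lemma:Oct2zzz} and~\ref{lemma:symmtoasymm}, together with the description of the Minkowski sum $\sum_{i} {\mathcal S}_{D(w^{(i)})}$ as a Schubitope-like polytope. First I would treat the second (asymmetric) statement, since the first follows from it via Lemma~\ref{lemma:symmtoasymm} (which says $C_{w^{(1)},\ldots,w^{(k-1)}}^{w^{(k)}} = C_{w^{(1)},\ldots,w^{(k-1)},w_0w^{(k)}}$, and ${\sf code}(w_0w^{(k)})$ versus ${\sf code}(w^{(k)})$ bookkeeping recovers the $(n-1,\ldots,1)$ vector). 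So suppose $C_{w^{(1)},\ldots,w^{(k-1)}}^{w^{(k)}}>0$. By Lemma~\ref{lemma:symmtoasymm}, this equals $[{\mathfrak S}_{w^{(k)}}]\prod_{i=1}^{k-1}{\mathfrak S}_{w^{(i)}}$, and since Schubert polynomials have nonnegative coefficients, positivity of this structure constant forces the monomial $x^{{\sf code}(w^{(k)})}$ (the lexicographically leading or rather the dominant monomial of ${\mathfrak S}_{w^{(k)}}$ — more precisely $x^{{\sf code}(w^{(k)})}$ appears in ${\mathfrak S}_{w^{(k)}}$) to appear in the product $f=\prod_{i=1}^{k-1}{\mathfrak S}_{w^{(i)}}$ with nonzero coefficient. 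Hence $[x^{{\sf code}(w^{(k)})}]f \neq 0$.

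Next I would invoke Corollary~\ref{cor:Oct11hhh}: $[x^{{\sf code}(w^{(k)})}]f\neq 0$ is equivalent to ${\sf code}(w^{(k)}) \in \sum_{i=1}^{k-1} {\mathcal S}_{D(w^{(i)})}$. By Lemma~\ref{lem:minksum} (and the fact, from Theorem~\ref{thm:genperm}, that each ${\mathcal S}_{D(w^{(i)})}$ is a generalized permutahedron given by a submodular function $z^{(i)}$), this Minkowski sum is the generalized permutahedron $P(\sum_i z^{(i)})$. The point is that the defining inequalities of $P(\sum_i z^{(i)})$ are exactly the ``Schubitope inequalities defining ${\mathcal S}_{D'}$'': one must check that the submodular function attached to the concatenated diagram $D' = D(w^{(1)},\ldots,w^{(k-1)})$ equals $\sum_i z^{(i)}$, equivalently that $\theta_{D'}(S) = \sum_{i=1}^{k-1}\theta_{D(w^{(i)})}(S)$ for every $S\subseteq[n]$. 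This is immediate from the definition of $\theta_D^c(S)$ since it is computed column-by-column and $D'$ is the disjoint concatenation of the columns of the $D(w^{(i)})$; paired parentheses and $\star$'s are tallied within each individual column and there is no interaction across the blocks. So membership ${\sf code}(w^{(k)})\in\sum_i{\mathcal S}_{D(w^{(i)})}$ says precisely that ${\sf code}(w^{(k)})$ satisfies $\sum_{i\in S}\alpha_i \leq \theta_{D'}(S)$ for all $S$ and $\sum_i\alpha_i = \#D'$, which is the claimed necessary condition.

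For the symmetric statement: a Schubert problem $(w^{(1)},\ldots,w^{(k)})$ has, by Lemma~\ref{lemma:Oct2zzz}, $C_{w^{(1)},\ldots,w^{(k)}} = [x_1^{n-1}\cdots x_{n-1}]\prod_{i=1}^k {\mathfrak S}_{w^{(i)}}$. If this is positive, then by Corollary~\ref{cor:Oct11hhh} applied to the full product $f = \prod_{i=1}^{k}{\mathfrak S}_{w^{(i)}}$ (with $k$ in place of $k-1$), the vector $(n-1,n-2,\ldots,1,0)$ lies in $\sum_{i=1}^k {\mathcal S}_{D(w^{(i)})} = {\mathcal S}_{D}$, using the additivity of $\theta$ over the concatenation $D = D(w^{(1)},\ldots,w^{(k)})$ exactly as above. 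That is the assertion.

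\textbf{Main obstacle.} I expect the only genuinely nontrivial point — beyond correctly chaining the cited lemmas — to be verifying that the submodular function of the concatenated diagram is literally the sum of the submodular functions of the pieces, i.e. the column-wise additivity $\theta_{D(w^{(1)},\ldots,w^{(k)})}(S) = \sum_{i}\theta_{D(w^{(i)})}(S)$, and hence that the Minkowski sum of the Schubitopes is again cut out by Schubitope inequalities for $D$. This is a short unwinding of the definition of $\theta_D^c(S)$ once one observes that every column of $D$ is a column of a unique $D(w^{(i)})$ and the matching of parentheses is internal to a column; I would state it as a brief lemma (or an inline observation) and use it twice. The rest is assembly.
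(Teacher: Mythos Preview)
Your proposal is correct and follows essentially the same route as the paper. The paper derives Theorem~\ref{thm:schubnec} in one sentence from the arguments proving Theorems~\ref{thm:main} and~\ref{thm:variation}, which establish $\Newton\bigl(\prod_i {\mathfrak S}_{w^{(i)}}\bigr)={\mathcal S}_{D'}$ via Lemma~\ref{lem:minksum} and Theorem~\ref{thm:genperm}; you unpack this identification explicitly by observing the column-wise additivity $\theta_{D'}(S)=\sum_i\theta_{D(w^{(i)})}(S)$, which the paper leaves implicit but is indeed the content of that step.
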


Let 
\[s_{\lambda}(x_1,\ldots,x_k)=\sum_{T} x^T\] be the \emph{Schur polynomial} of $\lambda$, 
where the sum is over semistandard Young tableaux of shape $\lambda$ filled using $\{1,2,\ldots,k\}$ and
$x^T=\prod_{i=1}^k x_i^{\#i\in T}$. Then
\[s_{\lambda}(x_1,\ldots,x_k)s_{\mu}(x_1,\ldots,x_k)=\sum_{\nu} c_{\lambda,\mu}^{\nu} s_{\nu}(x_1,\ldots,x_k),\]
where $c_{\lambda,\mu}^{\nu}$ is the \emph{Littlewood-Richardson coefficient}. By the proof of \cite[Proposition~2.9]{MTY},
\begin{equation}
\label{eqn:Oct19fff}
x^{\nu}\in s_{\lambda}s_{\mu} \text{ \ if and only if $\nu\in \Newton(s_{\lambda+\mu})={\mathcal P}_{\lambda+\mu}$ (the permutahedron for $\lambda+\mu$).}
\end{equation}
By Rado's theorem \cite[Theorem~1]{Rado}, this means $\nu\leq_{\rm Dom} \lambda+\mu$ (dominance order). That is
\[c_{\lambda,\mu}^{\nu}>0 \implies \sum_{i=1}^t \nu_i\leq \sum_{j=1}^t \lambda_j +\sum_{k=1}^t \nu_k, \text{ for $t\geq 1$}.\]
These are instances of the famous \emph{Horn's inequalities}; see the survey \cite{Fulton:horn}. 
(Those are generalized in the ``Levi-movable'' case of $X$ in work of P.~Belkale-S.~Kumar \cite{BK}.) Our methods are
in the same vein. Hence, we speculate Theorem~\ref{thm:schubnec} is a first glimpse of putative linear inequalities that control  $C_{w^{(1)},\ldots,w^{(k)}}>0$. We hope to study this further in a sequel.
%

\subsection{Proof of Theorems~\ref{thm:main}, \ref{thm:variation}, \ref{thm:schubnec} 
and Proposition~\ref{prop:Oct4yyy}:} \label{sec:theproofs}
We combine the proofs of these four results since they all stem from the same reasoning.

We prove Theorem~\ref{thm:variation} first.
It is known (\emph{e.g.}, follows from \cite[Theorem~2.5.1]{Manivel}) that
\begin{equation}
\label{eqn:hasterm}
[x^{{\sf code}(w)}]{\mathfrak S}_w\neq 0.
\end{equation}
Hence 
\begin{align*}
[x^{{\sf code}(w^{(k)})}]\prod_{i=1}^{k-1}{\mathfrak S}_{w^{(i)}}=0 \Rightarrow C_{w^{(1)},w^{(2)},\ldots,w^{(k-1)}}^{w^{(k)}}=0.
\end{align*}
 By one direction of Corollary~\ref{cor:Oct11hhh},
\begin{equation}
[x^{{\sf code}(w^{(k)})}]\prod_{i=1}^{k-1}{\mathfrak S}_{w^{(i)}}=0\impliedby {\sf code}(w^{(k)})\not\in \Newton\left(\prod_{i=1}^{k-1}{\mathfrak S}_{w^{(i)}}\right)=\sum_{i=1}^{k-1}{\mathcal S}_{D(w^{(i)})}.
\end{equation}
By Theorem~\ref{thm:genperm}, each ${\mathcal S}_{D(w^{(i)})}$ is a generalized permutahedron. Hence,
by Lemma~\ref{lem:minksum}, 
\[\Newton\left(\prod_{i=1}^{k-1}{\mathfrak S}_{w^{(i)}}\right)={\mathcal S}_{D'}.\]
Now we may apply Theorem~\ref{thm:ARY1} in the special case that $D=D'$ and $\alpha={\sf code}(w^{(k)})$ to
obtain the second sentence of the theorem. The final sentence follows from Corollary~\ref{cor:runtime}.
This completes the proof of
Theorem~\ref{thm:variation}.

The proof of Theorem~\ref{thm:main} is the same, except that we use Lemma~\ref{lemma:Oct2zzz}.

Theorem~\ref{thm:schubnec} follows from the above arguments, combined with Theorems~\ref{thm:ARY1}
and~\ref{thm:isSNP}.

Finally, we turn to Proposition~\ref{prop:Oct4yyy}.
We prove the contrapositive. Suppose Theorem~\ref{thm:variation}'s test is inconclusive, that is,
\begin{equation}
\label{eqn:Oct4}
[x^{{\sf code}(w^{(k)})}]{\mathfrak S}_{w^{(1)}}\cdots {\mathfrak S}_{w^{(k-1)}}\neq 0.
\end{equation}
\begin{claim} 
\label{claim:Oct19}
If $w\in S_n$ then 
${\sf code}(w)+{\sf code}(w_0w)=(n-1,n-2,\ldots,3,2,1,0)$.
\end{claim}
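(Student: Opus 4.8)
\textbf{Proof proposal for Claim~\ref{claim:Oct19}.}
The plan is to reduce the claimed vector identity to a row-by-row statement about the diagrams $D(w)$ and $D(w_0w)$, and then analyze these row contents directly from the combinatorial definition of the diagram. Fix $w \in S_n$ and let $u = w_0 w$, so that $u(i) = n+1 - w(i)$ for each $i$. The claim asserts $c_i(w) + c_i(u) = n - i$ for every $i \in [n]$, where $c_i$ counts the boxes of the relevant diagram in row $i$. Since by definition $c_i(w) = \#\{j : j < w(i),\ i < w^{-1}(j)\}$, I would first reinterpret $c_i(w)$ as the number of indices $j < w(i)$ that do \emph{not} appear among $w(1), \ldots, w(i)$; equivalently, $c_i(w) = \#\{j > i : w(j) < w(i)\}$, the number of inversions of $w$ with top index $i$. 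This is the standard Lehmer-code description, and it is the form I would use throughout.

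With this in hand, the core computation is a count of inversions at a fixed row. Writing $c_i(w) = \#\{j > i : w(j) < w(i)\}$ and $c_i(u) = \#\{j > i : u(j) < u(i)\} = \#\{j > i : n+1-w(j) < n+1-w(i)\} = \#\{j > i : w(j) > w(i)\}$, I observe that these two sets partition $\{j : j > i\}$ (there are no ties since $w$ is a permutation). Hence $c_i(w) + c_i(u) = \#\{j : j > i\} = n - i$, which is exactly the $i$-th component of $(n-1, n-2, \ldots, 1, 0)$. Summing over $i$, or rather just reading off componentwise, gives the claimed identity.

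The only genuine step requiring care is the passage from the defining formula $c_i(w) = \#\{(i,j) \in D(w)\} = \#\{j : j < w(i),\ i < w^{-1}(j)\}$ to the inversion count $\#\{j > i : w(j) < w(i)\}$; I would spell this out via the bijection $j \leftrightarrow w^{-1}(j)$, noting that the conditions $j < w(i)$ and $i < w^{-1}(j)$ translate, after setting $\ell = w^{-1}(j)$, into $w(\ell) < w(i)$ and $\ell > i$. Everything after that is the elementary partition argument above, so I do not anticipate any real obstacle — the main thing is simply to present the code/inversion dictionary cleanly so that the cancellation $c_i(w) + c_i(w_0 w) = n-i$ is transparent.
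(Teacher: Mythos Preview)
Your proof is correct and follows essentially the same approach as the paper: both reduce to showing $c_i(w)+c_i(w_0w)=n-i$ by observing that left multiplication by $w_0$ reverses the inequality $w(j)<w(i)$, so the two codes count complementary sets. The only cosmetic difference is that you partition $\{j:j>i\}$ directly via the Lehmer-code form $c_i(w)=\#\{j>i:w(j)<w(i)\}$, whereas the paper writes $c_r(w)=(w(r)-1)-\#\{i<r:w(i)<w(r)\}$ and instead partitions $\{i:i<r\}$; the content is the same.
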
 
\noindent\emph{Proof of Claim~\ref{claim:Oct19}:} By definition of $D(w)$,
\[c_r(w) =  (w(r)-1)-\#\{i<r:w(i)<w(r)\}.\]
On the other hand,
\begin{align*}
c_r(w_0w) & =(w_0w(r)-1)-\#\{i<r:w_0w(i)<w_0w(r)\}\\
				 \ & = \left((n+1-w(r))-1\right)-\#\{i<r:w(r)<w(i)\}.
\end{align*}
Hence $c_r(w)+c_r(w_0w)=n-r$, as desired.\qed

By
(\ref{eqn:Oct4}) and (\ref{eqn:hasterm}) combined, 
\begin{align*}
\ & [x_1^{n-1}x_2^{n-2}\cdots x_{n-1}]({\mathfrak S}_{w^{(1)}}\cdots {\mathfrak S}_{w^{(k-1)}}){\mathfrak S}_{w_0w^{(k)}}\\
 = & [x_1^{n-1}x_2^{n-2}\cdots x_{n-1}](x^{{\sf code}(w^{(k)})}+\cdots)(x^{{\sf code}(w_0w^{(k)})}+\cdots)\neq 0,
 \end{align*}
 where inequality is by Claim~\ref{claim:Oct19}. Thus Theorem~\ref{thm:main}'s test is inconclusive.
\qed

\subsection{A flexible version of the asymmetric test}
The condition (c) in defining ${\sf Tab}'$ can be replaced by the exponent vector of any monomial in ${\mathfrak S}_{w^{(k)}}$. Unfortunately, the number of such exponent vectors is potentially large. Instead,
one can sample points from ${\mathcal S}_{D(w)}$ as follows.
Construct the Rothe diagram $D(w)$. Fix a column $c$ of $D(w)$. Suppose the
boxes of $D(w)$ in that column are in rows $r_1,r_2,\ldots,r_z$. Find integers 
$1\leq x_1<x_2<\ldots<x_z$
such that $x_j\leq r_j$.  Repeat for every column $c$.
The result is an element of ${\sf Tab}(D(w),\alpha)$ for some $\alpha$.  (Thus one can create a randomized version of Theorem~\ref{thm:variation}.)

It is possible that, even with choice, no exponent vector exhibits nonvanishing:

\begin{example}
\label{exa:Oct8}
$C_{231645,231645}^{451623}=0$. Now, 
\[{\mathfrak S}_{451623}=x_1^{3}x_2^{3}x_4^{2}+x_1^{3}x_2^{3}{x_3}{ x_4}+x_1^{3}x_2^{3}x_3^{2}.\]
Here ${\sf code}(451623)=3302$. One can check that 
\[[x^{{\sf code}(451623)}]{\mathfrak S}_{231645}^2 > 0, [x_1^3x_2^3x_4^2]{\mathfrak S}_{231645}^2 >0, \text{\ and  
$[x_1^3x_2^3x_3 x_4] {\mathfrak S}_{231645}^2 >0$.}\]

Thus Theorem~\ref{thm:variation}'s test is inconclusive using any choice of monomial from
${\mathfrak S}_{451623}$.\qed
\end{example}

Individual monomials have no geometric meaning in Schubert calculus. Thus, our tests \emph{seem} inherently
combinatorial, as opposed to being avatars of the geometry.

\subsection{Certificate of vanishing}
Textbook linear programming results implying efficiency of Theorems~\ref{thm:main}
and~\ref{thm:variation} offer an additional benefit. There is a short certificate when ${\sf Tab}$ or ${\sf Tab}'$
is empty. This follows from standard reasoning using Farkas' lemma. 

Theorem~\ref{thm:schubnec} provides an alternative certification method. Recording one Schubitope
inequality defining ${\mathcal S}_D$ for which $(n,n-1,\ldots,2,1)$ fails proves $C_{w^{(1)},\ldots,w^{(k)}}=0$.
(A similar statement holds about ${\mathcal S}_{D'}$.) 

\section{Comparisons to other vanishing tests}
\label{sec:5}

We compare our tests to three non-\emph{ad hoc} vanishing tests. There are examples
where our method is successful where the others are not, and \emph{vice versa}.

\subsection{Bruhat order}
\emph{Bruhat order} on $S_n$ is (combinatorially) defined as the reflexive and transitive closure of the covering relations  
$u\leq ut_{ij}$ if $\ell(ut_{ij})=\ell(u)+1$, 
where $t_{ij}$ is the transposition interchanging $i$ and $j$. There exist efficient tests to determine $u\leq v$, such as the Ehresmann \emph{tableau criterion} \cite[Proposition~2.2.11]{Manivel}. The following is well-known; we include a proof since we do not know where it exactly
appears in the literature:

\begin{Fact}[Bruhat vanishing test]
\label{fact:bruhat}
$C_{w^{(1)},\ldots,w^{(k)}}=0$ if $w^{(i)}\not\leq w_0w^{(j)}$ for some $i\neq j$.
\end{Fact}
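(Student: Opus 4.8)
The plan is to reduce Fact~\ref{fact:bruhat} to the case $k=2$ by the usual ``merging'' trick and then use the duality statement of Lemma~\ref{lemma:symmtoasymm}. First I would recall the basic monotonicity of Schubert structure constants: if $C_{u,v}^{w}>0$ then $u\leq w$ and $v\leq w$ in Bruhat order. This is standard (it follows, e.g., from the fact that $\sigma_w$ restricted to the Schubert variety $X_u$ vanishes unless $u\leq w$, or purely combinatorially from Monk's rule together with the subword characterization of Bruhat order); I would cite \cite[\S3.6]{Manivel} or \cite{Fulton} for it. Granting that, suppose $C_{w^{(1)},\ldots,w^{(k)}}>0$. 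By associativity we may expand the product $\prod_{i}\sigma_{w^{(i)}}$ in the Schubert basis one factor at a time, so there is a chain of permutations $u_2,\ldots,u_{k-1}$ with $C_{w^{(1)},w^{(2)}}^{u_2}>0$, $C_{u_2,w^{(3)}}^{u_3}>0$, \ldots, and finally $C_{u_{k-1},w^{(k)}}^{w_0}>0$ (using $\sigma_{w_0}$ appears with positive coefficient by hypothesis, which is Lemma~\ref{lemma:Oct2zzz} together with the definition of a Schubert problem).

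Next I would run the monotonicity bound along this chain. From $C_{w^{(1)},w^{(2)}}^{u_2}>0$ we get $w^{(1)}\leq u_2$ and $w^{(2)}\leq u_2$; from $C_{u_2,w^{(3)}}^{u_3}>0$ we get $u_2\leq u_3$ and $w^{(3)}\leq u_3$; iterating, $w^{(i)}\leq u_{\max(i,2)}\leq\cdots\leq u_{k-1}$ for every $i$, and likewise $w^{(k)}\leq u_{k-1}$ — wait, more carefully: each $w^{(i)}$ lies below whichever partial product it first enters, and the partial-product indices $u_2\leq u_3\leq\cdots\leq u_{k-1}$ form a chain, so $w^{(i)}\leq u_{k-1}$ for all $i$. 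Symmetrically, to get the bound involving $w_0$ I would instead group the factors so that a chosen pair $(w^{(i)},w^{(j)})$ is multiplied last: since the product equals $C_{w^{(1)},\ldots,w^{(k)}}\,\sigma_{w_0}+\cdots$, there is a permutation $v$ with $C_{(\text{rest})}^{v}>0$ and $C_{v,w^{(i)},w^{(j)}}$-type positivity ending at $w_0$, which by Lemma~\ref{lemma:symmtoasymm} ($C_{v,w^{(i)},w^{(j)}}=C_{v,w^{(i)}}^{w_0 w^{(j)}}$ is the $k=3$ instance, and more generally $C_{\ldots,w^{(i)}}^{w_0w^{(j)}}>0$) forces $w^{(i)}\leq w_0 w^{(j)}$.

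So the cleanest route: contrapositive. Assume $C_{w^{(1)},\ldots,w^{(k)}}>0$; fix $i\neq j$; I will show $w^{(i)}\leq w_0 w^{(j)}$. Reassociate $\prod_m \sigma_{w^{(m)}} = \big(\prod_{m\neq i,j}\sigma_{w^{(m)}}\big)\cdot\sigma_{w^{(i)}}\cdot\sigma_{w^{(j)}}$. Expanding $\prod_{m\neq i,j}\sigma_{w^{(m)}}=\sum_u C^{u}_{(w^{(m)})_{m\neq i,j}}\sigma_u$ and then pairing with $\sigma_{w^{(i)}}$, positivity of the $\sigma_{w_0}$-coefficient of the whole product means there is some $u$ with $C^{u}_{(w^{(m)})_{m\neq i,j}}>0$ and $[\sigma_{w_0}]\,\sigma_u\sigma_{w^{(i)}}\sigma_{w^{(j)}}>0$, i.e.\ $C_{u,w^{(i)},w^{(j)}}>0$. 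By Lemma~\ref{lemma:symmtoasymm} (the $C_{u,v}^w=C_{u,v,w_0w}$ form, read in reverse), $C_{u,w^{(i)},w^{(j)}}=C_{u,w^{(i)}}^{w_0w^{(j)}}>0$, and now monotonicity of structure constants gives $w^{(i)}\leq w_0 w^{(j)}$, as wanted. The only genuinely nontrivial input is the monotonicity lemma $C_{a,b}^{c}>0\Rightarrow a\leq c$ and $b\leq c$; everything else is bookkeeping with associativity and Lemma~\ref{lemma:symmtoasymm}. The main obstacle is therefore just stating and citing that monotonicity fact cleanly — I would either cite it or give the two-line argument via Monk's formula (the product $\sigma_a\cdot(\text{anything})$ expanded in the basis only ever involves $w\geq a$, by induction on $\ell(b)$ using that Monk's rule increases length by transpositions that raise in Bruhat order).
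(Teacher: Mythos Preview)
Your proposal is correct and follows essentially the same approach as the paper: both reduce to showing that $C_{u,v}^{w_0w^{(j)}}>0$ forces $u\leq w_0w^{(j)}$ via Monk's formula, using Lemma~\ref{lemma:symmtoasymm} to pass between the symmetric and asymmetric forms. The only differences are presentational---the paper proves the monotonicity statement inline for $k=3$ and defers general $k$ to ``the general case is similar,'' whereas you isolate monotonicity as a black-box lemma and handle general $k$ explicitly by reassociating the product.
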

\begin{proof}
We prove the case $k=3$; the general case is similar. Say $u\not\leq w_0w$ but $C_{u,v,w}>0$. 
By Lemma~\ref{lemma:symmtoasymm},
$C_{u,v}^{w_0w}=C_{u,v,w}>0$. \emph{Monk's formula} \cite[Theorem~2.7.1]{Manivel} states that if $z\in S_n$,
\begin{equation}
\label{eqn:Monk}
\sigma_z\smile \sigma_{t_{m,m+1}}=\sum \sigma_{zt_{jk}} \in H^*(X);
\end{equation}
the sum is over all $j\leq m<k$ such that $\ell(zt_{jk})=\ell(w)+1$ and $zt_{jk}\in S_n$. 
Suppose $s_m:=t_{m,m+1}$ and $v=s_{m_1}s_{m_2}\cdots s_{m_{\ell(v)}}$ is a reduced expression for $v$.
By (\ref{eqn:Monk}), for some $\alpha\in {\mathbb Z}_{>0}$, 
\begin{equation}
\label{eqn:Oct22rrr}
\prod_{i=1}^{\ell(v)} \sigma_{s_{m_i}}=\alpha\, \sigma_{v}+\text{(positive sum of Schubert classes)}.
\end{equation}
By induction using (\ref{eqn:Monk}), 
\begin{equation}
\label{eqn:Oct23gtg}
[\sigma_y]\  \sigma_u\prod_{i=1}^{\ell(v)}\sigma_{s_{m_i}}\neq 0 \iff y\geq u.
\end{equation} 
By the positivity of Schubert calculus, and the assumption $C_{u,v}^{w_0w}>0$, 
\[[\sigma_{w_0w}] \  \sigma_u(\alpha\, \sigma_v+\text{(positive sum of Schubert classes)})\neq 0.\]
In view of (\ref{eqn:Oct22rrr}), this contradicts (\ref{eqn:Oct23gtg}).
\end{proof}

We give bad news first:

\begin{example}
$(u,v,w)=(1243,1342,3142)$ is a vanishing problem detected by Fact~\ref{fact:bruhat} since $1342=v\not\leq w_0w=2413$. Our methods do not detect $C_{u,v}^{w_0w}=C_{1243,1342}^{2413}$. Since 
\[{\mathfrak S}_{1243}{\mathfrak S}_{1342}=x_2 x_3^{2}+x_1 x_3^{2}+3 x_1 x_2 x_3+x_2^{2} x_3
+\underline{x_1 x_2^{2}}+x_1^{2} x_3+\underline{x_1^{2} x_2},\]
contains both monomials of ${\mathfrak S}_{w_0w}={\mathfrak S}_{2413}=x_1 x_2^{2}+x_1^{2} x_2$, no monomial
of ${\mathfrak S}_{w_0w}$ can be used to detect vanishing. In particular, Theorem~\ref{thm:variation} is inconclusive
(and hence by Proposition~\ref{prop:Oct4yyy}, the symmetric test is
also inconclusive.) Since $C_{u,v,w}=C_{v,w}^{w_0u}=C_{u,w}^{w_0v}$, one hopes the asymmetric method
shows either $C_{v,w}^{w_0u}=C_{1342,3142}^{4312}=0$ or $C_{u,w}^{w_0v}=C_{1243,3142}^{4213}=0$. Unfortunately, both attempts are similarly inconclusive.\qed
\end{example}

\begin{example}
The vanishing of the Schubert problem 
$(u,v,w)=(1423,1423,1423)$
is undetected by Fact~\ref{fact:bruhat}. Now
\[{\mathfrak S}_{1423}^3=x_2^{6}+3 x_1 x_2^{5}+6 x_1^{2} x_2^{4}+7 x_1^{3} x_2^{3}+6 x_1^{4} x_2^{2}
+3 x_1^{5} x_2+x_1^{6}\]
does not contain 
${\mathfrak S}_{w_0}={\mathfrak S}_{4321}=x_1^3 x_2^2 x_3$
and hence vanishing is
seen by Theorem~\ref{thm:main}.\qed
\end{example}

\subsection{A.~Knutson's descent cycling} In \cite{Knutson:descent}, A.~Knutson introduced a vanishing criterion.  
Recall, $u\in S_n$ has a \emph{descent} at position $i$ if $u(i)>u(i+1)$ and has an \emph{ascent} at position $i$ otherwise.
That is, respectively, $us_i\leq u$ and $us_i\geq u$.

\begin{Fact}[dc triviality]
If $(u,v,w)$ is a Schubert problem such that 
$us_i\geq u, vs_i\geq v, ws_i\geq w$
then $C_{u,v,w}=0$.
\end{Fact}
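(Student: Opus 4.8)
The plan is to reduce to the asymmetric invariant via Lemma~\ref{lemma:symmtoasymm} and then exploit that, on Schubert polynomials, the divided difference $\partial_i$ kills precisely the classes having an ascent at position $i$. By Lemma~\ref{lemma:symmtoasymm} (applied with $w_0w$ in place of $w^{(k)}$), $C_{u,v,w}=C_{u,v}^{w_0w}=[\mathfrak{S}_{w_0w}]\,\mathfrak{S}_u\mathfrak{S}_v$, the coefficient of $\mathfrak{S}_{w_0w}$ in the Schubert expansion $\mathfrak{S}_u\mathfrak{S}_v=\sum_{z\in S_\infty}C_{u,v}^z\,\mathfrak{S}_z$. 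So it suffices to prove $C_{u,v}^{w_0w}=0$.

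First I would unwind the ascent hypotheses on $u$ and $v$. The conditions $us_i\geq u$ and $vs_i\geq v$ say $u$ and $v$ have ascents at position $i$, so by the recursion defining Schubert polynomials $\mathfrak{S}_u=\partial_i\mathfrak{S}_{us_i}$ and $\mathfrak{S}_v=\partial_i\mathfrak{S}_{vs_i}$. Any polynomial of the form $\partial_i g$ is invariant under the swap $s_i\colon x_i\leftrightarrow x_{i+1}$, immediately from $\partial_i g=(g-s_ig)/(x_i-x_{i+1})$. Hence $\mathfrak{S}_u$ and $\mathfrak{S}_v$, and therefore their product, are $s_i$-symmetric, and so $\partial_i(\mathfrak{S}_u\mathfrak{S}_v)=0$.

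The heart of the argument is then to apply $\partial_i$ termwise to the Schubert expansion. For $z\in S_\infty$, $\partial_i\mathfrak{S}_z=\mathfrak{S}_{zs_i}$ when $z$ has a descent at $i$ (the defining recursion), and $\partial_i\mathfrak{S}_z=0$ when $z$ has an ascent at $i$ (then $\mathfrak{S}_z$ is itself of the form $\partial_i(\cdot)$, hence $s_i$-symmetric). Thus
\[0=\partial_i\!\left(\mathfrak{S}_u\mathfrak{S}_v\right)=\sum_{z\,:\,zs_i<z}C_{u,v}^z\,\mathfrak{S}_{zs_i}.\]
Since $z\mapsto zs_i$ is an involution carrying $\{z:zs_i<z\}$ bijectively onto $\{y:ys_i>y\}$, the $\mathfrak{S}_{zs_i}$ above are pairwise distinct members of the Schubert basis, hence linearly independent, forcing $C_{u,v}^z=0$ for every $z$ with $zs_i<z$.

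To conclude, $ws_i\geq w$ means $w(i)<w(i+1)$, so $(w_0w)(i)=n+1-w(i)>n+1-w(i+1)=(w_0w)(i+1)$, i.e.\ $w_0w$ has a descent at $i$; by the previous step $C_{u,v}^{w_0w}=0$, hence $C_{u,v,w}=0$. The general-$k$ statement follows verbatim: by Lemma~\ref{lemma:symmtoasymm}, $C_{w^{(1)},\dots,w^{(k)}}=[\mathfrak{S}_{w_0w^{(k)}}]\prod_{j<k}\mathfrak{S}_{w^{(j)}}$, the product $\prod_{j<k}\mathfrak{S}_{w^{(j)}}$ is $s_i$-symmetric since each $w^{(j)}$ with $j<k$ has an ascent at $i$, and applying $\partial_i$ to its Schubert expansion kills $C_{w^{(1)},\dots,w^{(k-1)}}^{z}$ for every $z$ with $zs_i<z$, in particular for $z=w_0w^{(k)}$. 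The point that needs care --- and the reason the argument lives in the Schubert basis rather than at the level of monomials --- is that $s_i$-symmetry of the product does \emph{not} by itself make the coefficient of $x_1^{n-1}\cdots x_{n-1}$ vanish: a polynomial symmetric in $x_i,x_{i+1}$ can perfectly well contain that monomial, so the vanishing only becomes visible after expanding in Schubert polynomials, where $\partial_i$ acts by the clean shift $z\mapsto zs_i$.
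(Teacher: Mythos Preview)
Your argument is correct. The paper, however, does not supply its own proof of this Fact: it is stated without proof in Section~\ref{sec:5} as a result attributed to \cite{Knutson:descent}. So there is nothing to compare against directly.

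For the record, your approach is the standard one and matches the spirit of Knutson's original argument: the ascent hypotheses on $u$ and $v$ force $\mathfrak{S}_u$ and $\mathfrak{S}_v$ to lie in the image of $\partial_i$ (equivalently, to be symmetric in $x_i,x_{i+1}$), so $\partial_i(\mathfrak{S}_u\mathfrak{S}_v)=0$; expanding in the Schubert basis and using that $\partial_i$ sends $\mathfrak{S}_z$ either to $0$ or to a \emph{distinct} basis element $\mathfrak{S}_{zs_i}$ then kills every coefficient $C_{u,v}^z$ with $zs_i<z$. Your translation of the ascent of $w$ into a descent of $w_0w$ is exactly right and closes the argument via Lemma~\ref{lemma:symmtoasymm}. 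The closing remark that $s_i$-symmetry alone does not kill the monomial $x_1^{n-1}\cdots x_{n-1}$, so one must pass to the Schubert basis, is a nice clarification of why the argument is organized this way.
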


\begin{example}
\label{baddc}
The triple $(1423,1423,1342)$ is dc trivial and hence 
$C_{1423,1423,1342}=0$. Here, the asymmetric
test (Theorem~\ref{thm:variation}) is inconclusive (again, thus by Proposition~\ref{prop:Oct4yyy}, the symmetric test is
also inconclusive). 
Indeed, $C_{u,v}^{w_0w}=C_{1423,1423}^{4213}=0$ is not detected since 
\[{\mathfrak S}_{1423}^2=x_2^{4}+2 {x_1} x_2^{3}+3 x_1^{2} x_2^{2}+\underline{2 x_1^{3} {x_2}}+x_1^{4},\]
but
${\mathfrak S}_{w_0w}={\mathfrak S}_{4213}=x_1^3  x_2$. Also $C_{u,w}^{w_0v}=0$ and $C_{v,w}^{w_0u}=0$
are not detected since
\[{\mathfrak S}_{1423}{\mathfrak S}_{1342}=x_2^{3} {x_3}+2 {x_1} x_2^{2} {x_3}+2 x_1^{2} {x_2} {x_3}+
\underline{x_1^{3} {x_3}}+{x_1} x_2^{3}+x_1^{2} x_2^{2}+\underline{x_1^{3} {x_2}}.\]
Since
${\mathfrak S}_{w_0u}={\mathfrak S}_{w_0v}={\mathfrak S}_{4132}=x_1^{3} {x_3}+x_1^{3} {x_2}$,
no lattice point in ${\mathcal S}_{D(4132)}$ proves vanishing.
\qed
\end{example}

\begin{example}
\label{exa:Oct9aaa}
The Schubert problem $(3256147, 2143657, 4632175)$ from Example~\ref{first:exa} is not
dc trivial, but 
$C_{3256147,2143657,4632175}=0$, 
as determined by Theorem~\ref{thm:main}.\qed
\end{example}

Define the \emph{descent cycling equivalence} $\sim$  on
Schubert problems by
\begin{itemize}
\item[(dc.1)] $(u,v,w)\sim (us_i,v,ws_i), (u,vs_i,ws_i)$ if $us_i\geq u,vs_i\geq v, ws_i\leq w$;
\item[(dc.2)] $(u,v,w)\sim (us_i,v,ws_i), (us_i,vs_i,w)$ if $us_i\leq u,vs_i\geq v, ws_i\geq w$;
\item[(dc.3)] $(u,v,w)\sim (u,vs_i,ws_i),(us_i,vs_i,w)$ if $vs_i\leq v, us_i\geq u,ws_i\geq w$.
\end{itemize}

\begin{Fact}[{\cite{Knutson:descent}}]
\label{fact:equiv0}
$C_{u,v,w}=C_{u',v',w'}$ if $(u,v,w)\sim(u',v',w')$. In particular,
$C_{u,v,w}=0$ if $(u,v,w)$
 is $\sim$ equivalent to a dc trivial problem.
\end{Fact}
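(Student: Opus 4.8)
\textbf{Proof proposal for Fact~\ref{fact:equiv0}.} The plan is to establish the single-step invariance $C_{u,v,w}=C_{u',v',w'}$ whenever $(u,v,w)\sim(u',v',w')$ via one application of (dc.1), (dc.2), or (dc.3); the general statement then follows since $\sim$ is generated by these moves, and the ``in particular'' clause is immediate because a dc trivial problem has $C=0$ by the dc triviality Fact. By the symmetry of $C_{u,v,w}$ in its three arguments (visible from (\ref{eqn:Oct22mmm}), or from Lemma~\ref{lemma:symmtoasymm} together with the known $S_3$-symmetry of Schubert intersection numbers), it suffices to treat one of the three cases, say (dc.2): assume $us_i\leq u$, $vs_i\geq v$, $ws_i\geq w$, and show $C_{u,v,w}=C_{us_i,v,ws_i}=C_{us_i,vs_i,w}$.

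First I would translate everything into the asymmetric / polynomial language supplied by Lemma~\ref{lemma:symmtoasymm}: $C_{u,v,w}=C_{u,v}^{w_0w}=[\sigma_{w_0w}]\,\sigma_u\sigma_v$. The workhorse is the \emph{divided difference / Demazure operator} $\partial_i$ acting on $H^*(X)$ (equivalently the operator $\pi_i$ or the geometric pushforward along the $\mathbb P^1$-bundle $X\to X_i$ for the $i$-th minimal parabolic). Its key properties are: $\partial_i$ is $H^*(X)^{s_i}$-linear (it is a module map over the subring of $s_i$-invariants), $\sigma_z\mapsto \sigma_{zs_i}$ if $zs_i\le z$ and $\mapsto 0$ if $zs_i\ge z$, and it is the adjoint operation that converts $\sigma_{w_0w}$ into $\sigma_{w_0ws_i}=\sigma_{w_0(ws_i)}$ precisely when $ws_i\ge w$ (so that $w_0ws_i\le w_0w$). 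Now apply $\partial_i$ to the product $\sigma_u\sigma_v$. Since $vs_i\ge v$, the class $\sigma_v$ is $s_i$-invariant modulo lower terms — more precisely, I would use the cleaner fact that $\sigma_u\sigma_v$ expands in the Schubert basis and applying $\partial_i$ sends $\sigma_z\mapsto \sigma_{zs_i}$ or $0$; reading off the coefficient of $\sigma_{w_0ws_i}$ in $\partial_i(\sigma_u\sigma_v)$ and comparing with the coefficient of $\sigma_{w_0w}$ in $\sigma_u\sigma_v$ gives the identity, \emph{provided} the terms that $\partial_i$ kills or merges do not interfere. The hypothesis $us_i\le u$ is exactly what lets us instead write $\partial_i(\sigma_u\sigma_v)=\partial_i(\sigma_u)\cdot\sigma_v'+\dots$ using the Leibniz-type rule $\partial_i(fg)=\partial_i(f)\,g + (s_i f)\,\partial_i(g)$, and the three sign conditions in (dc.2) are precisely what make two of the three resulting terms vanish, isolating the desired equality with $C_{us_i,v,ws_i}$; the other equality $C_{us_i,vs_i,w}$ comes from applying $\partial_i$ with the roles of the second and third indices swapped (legitimate by $S_3$-symmetry).

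Concretely the steps are: (1) reduce to a single covering move and, by symmetry, to case (dc.2); (2) recall/state the Leibniz rule $\partial_i(fg)=\partial_i(f)g+(s_if)\partial_i(g)$ and the action $\partial_i\sigma_z\in\{\sigma_{zs_i},0\}$; (3) compute $\partial_i(\sigma_u\sigma_v)$ under $us_i\le u$, $vs_i\ge v$, using that $\partial_i\sigma_u=\sigma_{us_i}$ and $\partial_i\sigma_v=0$ and $s_i\sigma_v=\sigma_v+\big((\partial_i\sigma_v)\text{-correction}\big)=\sigma_v$ in the relevant situation, to get $\partial_i(\sigma_u\sigma_v)=\sigma_{us_i}\sigma_v$; (4) take $[\sigma_{w_0ws_i}]$ of both sides, using that $\partial_i$ is the coefficient-transport map $[\sigma_{w_0ws_i}]\partial_i(F)=[\sigma_{w_0w}]F$ when $ws_i\ge w$ (dual statement to the Schubert-class action), yielding $C_{us_i,v}^{w_0ws_i}=C_{u,v}^{w_0w}$, i.e.\ $C_{us_i,v,ws_i}=C_{u,v,w}$; (5) repeat with the second factor to get the other equivalence; (6) conclude the general case by transitivity and the dc triviality Fact.

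The main obstacle is Step (3)/(4): being careful about exactly which terms $\partial_i$ annihilates and which it relabels, and verifying that the ``coefficient-transport'' identity $[\sigma_{w_0ws_i}]\,\partial_i(F)=[\sigma_{w_0w}]\,F$ holds with no extra contributions — this requires knowing that $\partial_i\sigma_z=\sigma_{zs_i}$ for $zs_i<z$ and $=0$ otherwise, and that distinct $z$ with $zs_i<z$ map to distinct $zs_i$, so no cancellation or collision occurs in the basis expansion. Once that bookkeeping is pinned down, each of (dc.1)--(dc.3) is just the three sign patterns under which the two unwanted Leibniz terms die, and the proof is a short computation. (The authors likely short-circuit much of this by citing \cite{Knutson:descent} directly and only sketching; the self-contained version above is the fallback.)
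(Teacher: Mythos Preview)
The paper does not prove this Fact at all; it is stated with a bare citation to \cite{Knutson:descent}. You correctly anticipated this in your closing parenthetical.

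Your sketch is essentially Knutson's own argument and is correct. Two small cleanups are worth noting. In Step~(3) you do not need any information about $s_i\sigma_v$: writing the Leibniz rule as $\partial_i(\sigma_u\sigma_v)=(\partial_i\sigma_u)\sigma_v+(s_i\sigma_u)(\partial_i\sigma_v)$, the hypothesis $vs_i\ge v$ gives $\partial_i\sigma_v=0$, so the second term vanishes outright and the first becomes $\sigma_{us_i}\sigma_v$ by $us_i\le u$. In Step~(4), the bookkeeping you flagged as the main obstacle is indeed harmless: since $\partial_i$ sends $\sigma_z\mapsto\sigma_{zs_i}$ when $zs_i<z$ and to $0$ otherwise, and $z\mapsto zs_i$ is injective on $\{z:zs_i<z\}$, one has $[\sigma_y]\,\partial_i F=[\sigma_{ys_i}]\,F$ whenever $ys_i>y$; taking $y=w_0ws_i$, the condition $ys_i>y$ reads $w_0w>w_0ws_i$, equivalently $ws_i>w$, which is the (dc.2) hypothesis. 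No collisions or extra contributions occur.
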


\begin{example}
A reported in \cite{Knutson:descent}, for $n=6$ there is one dc equivalence class of problems $(u,v,w)$ which vanishes but does not contain a dc trivial triple.
This is precisely the problem studied in Example~\ref{exa:Oct8}, which our methods also cannot explain. \qed
\end{example}

\begin{example}
Let $(u,v,w)=(3216547, 3216547, 4261573)$ be a problem in $S_7$. Theorem~\ref{thm:main} shows 
$C_{u,v,w}=0$ (any element of ${\sf Tab}$ must contain at least seven $1$'s). The $\sim$
class contains $9$ elements, namely
\[(3216574,  3261547, 4216537), (3216547, 3216574, 4261537), (3261547, 3216574, 4216537),\] 
\[(3261547, 3216547, 4216573), (3216574, 3216547, 4261537), (3216547, 3216547, 4261573),\]
\[(3261574, 3216547, 4216537), (3216547, 3261574, 4216537), (3216547, 3261547, 4216573).\]
None are dc trivial and thus Fact~\ref{fact:equiv0} is inconclusive.\qed 
\end{example}

\subsection{K.~Purbhoo's root games} K.~Purbhoo's \emph{root games} from \cite{Purbhoo:root} give a vanishing
criteria. Fix the positive roots $\Phi^+$ associated
to $GL_n$ to be 
$\alpha_{i,j}=\varepsilon_i-\varepsilon_j \text{\ for $1\leq i<j\leq n$,}$
where $\varepsilon_i$ is the $i$-th
standard basis vector. The poset $P$ of positive roots takes the form
\[\tableau{\alpha_{12} & \alpha_{13} & \alpha_{14} & \alpha_{15} & \alpha_{16} & \alpha_{17}\\
 & \alpha_{23} & \alpha_{24} & \alpha_{25} & \alpha_{26} & \alpha_{27}\\
 && \alpha_{34} & \alpha_{35} & \alpha_{36} & \alpha_{37}\\
 &&&\alpha_{45} & \alpha_{46} & \alpha_{47}\\
 &&&&\alpha_{56} & \alpha_{57}\\
 &&&&& \alpha_{67}}
\]
The maximal element of this poset is the highest root $\alpha_{1n}$.
For each $i$ place a token $\bullet$ in square $\alpha_{mn}$ if $w^{(i)}(m)>w^{(i)}(n)$. 
This is called the \emph{initial position}. An upper order filter $A$ is an up-closed subset of $P$. 
This initial position is \emph{doomed} if there exists an upper order filter $A$ such that there are more
tokens in $A$ than $\#A$. This is \cite[Theorem~3.6]{Purbhoo:root}:
\begin{Fact}[Doomed root game]
\label{fact:purbhoo}
If $(w^{(1)},\ldots,w^{(k)})$'s initial position is doomed, $C_{w^{(1)},\ldots,w^{(k)}}=0$.
\end{Fact}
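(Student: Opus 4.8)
\textbf{Proof proposal for Fact~\ref{fact:purbhoo} (Doomed root game).}
The plan is to connect Purbhoo's tokens directly to the Schubitope machinery already developed, showing that a doomed position yields an upper order filter that translates into a violated Schubitope inequality, whence $C_{w^{(1)},\ldots,w^{(k)}}=0$ by Theorem~\ref{thm:schubnec}. The key observation is that the number of tokens of the $i$-th player inside an up-closed filter $A$ should be comparable to a count of boxes of $D(w^{(i)})$ inside a suitable set $S$, and the threshold $\#A$ should be comparable to the Schubitope bound $\theta_{D(w^{(i)})}(S)$ (or to $\sum_{m\in S}(n-m)$ on the side of the special vector).

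First I would recall that a token sits in square $\alpha_{m\ell}$ (with $m<\ell$) exactly when $(m,\ell)$ is an inversion of $w^{(i)}$, and that the number of inversions involving row $m$ as the larger entry, i.e.\ $\#\{\ell>m : w^{(i)}(m)>w^{(i)}(\ell)\}$, is precisely the code entry $c_m(w^{(i)})$; summing over a set $S$ of rows gives $\sum_{m\in S}c_m(w^{(i)})$, while summing over all rows of $D=D(w^{(1)},\ldots,w^{(k)})$ recovers $\#D = \sum_i \ell(w^{(i)})=\binom n2$. Next I would translate an arbitrary upper order filter $A$ of the positive-root poset $P$ into a subset $S_A\subseteq[n]$: since $\alpha_{m\ell}\le \alpha_{m'\ell'}$ in $P$ iff $m'\le m$ and $\ell\le\ell'$, an up-closed $A$ is determined by, for each pair, closing upward, and the natural candidate is to read off the "leftmost available columns / topmost rows" structure of $A$ and match it against the parenthesization word $\omega_{c,S}(D)$ defining $\theta_D^c(S)$. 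The heart of the argument is the inequality
\[
\sum_{i=1}^k \#\{\text{tokens of }w^{(i)}\text{ in }A\}\ \le\ \#A\ +\ \Big(\theta_D(S_A)-\sum_{m\in S_A}(n-m)\Big)\ +\ (\text{correction}),
\]
arranged so that "more tokens in $A$ than $\#A$" forces $\sum_{m\in S_A}(n-m) > \theta_D(S_A)$, i.e.\ the vector $(n-1,n-2,\ldots,1,0)$ (equivalently $(n-1,\ldots,2,1)$ after the standard shift used in Lemma~\ref{lemma:Oct2zzz}) violates the Schubitope inequality for $S_A$; then Theorem~\ref{thm:schubnec} gives vanishing. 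Concretely I would run the count column by column: for each column $c$, compare the number of tokens lying in $A$ that come from inversions "recorded in column $c$" of the various $D(w^{(i)})$ with the contribution $\theta_D^c(S_A)$ of that column to the Schubitope threshold, using the parenthesis-matching definition to absorb the slack.

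The main obstacle, and where I expect to spend the most care, is the precise dictionary between \emph{arbitrary} upper order filters $A$ in the triangular root poset and subsets $S\subseteq[n]$ feeding the Schubitope: a general $A$ is a staircase-shaped region, not obviously a "rectangular" condition, so I must verify that the worst-case (most token-dense) filters are exactly those induced by the parenthesis-matching that computes $\theta_D^c(S)$, and that the elementary bound $\#A \ge \sum_{m\in S_A}(n-m) - (\text{paired parentheses})$ holds filter-by-filter and column-by-column. A clean way to handle this is to first prove the single-permutation statement — that a doomed position forces $\mathrm{code}(w)\notin{\mathcal S}_{D'}$ type inequalities — and then note that tokens, $\#A$, and $\theta_D(S)$ are all additive over the concatenation $D=D(w^{(1)})\sqcup\cdots\sqcup D(w^{(k)})$, so the multi-factor case follows by summation. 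If the direct combinatorial matching proves too delicate, an alternative is to factor through Purbhoo's own characterization of doomed positions in terms of his "$G$-game" moves and show each move preserves (non)membership in $\sum_i{\mathcal S}_{D(w^{(i)})}$, but I would try the explicit filter-to-subset translation first since it keeps everything inside the linear-programming framework of Theorem~\ref{thm:schubnec}.
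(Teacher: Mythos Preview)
Your strategy cannot succeed, and the paper itself contains the counterexample. You propose to show that a doomed initial position forces the vector $(n-1,n-2,\ldots,1,0)$ to violate some Schubitope inequality for ${\mathcal S}_D$, and then to invoke Theorem~\ref{thm:schubnec}. But if this implication held, then every doomed Schubert problem would also be detected by Theorem~\ref{thm:main} (since, by the argument in Section~\ref{sec:theproofs}, ${\sf Tab}=\emptyset$ is equivalent to $(n-1,\ldots,1,0)\notin{\mathcal S}_D$). The paper explicitly exhibits the Schubert problem $(1423,1423,1342)$ as doomed (the example immediately following Fact~\ref{fact:purbhoo}) while Example~\ref{baddc} shows that the symmetric test---and hence the Schubitope inequalities---are inconclusive for this same triple. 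Thus ``doomed $\Rightarrow$ Schubitope inequality violated'' is simply false, and no amount of care in the filter-to-subset dictionary will repair it: the obstruction is not a technical matching issue but a genuine logical gap between the two vanishing criteria.

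As for the paper's own treatment: it does not prove Fact~\ref{fact:purbhoo} at all. The statement is attributed directly to \cite[Theorem~3.6]{Purbhoo:root} and is quoted here only for purposes of comparison with the authors' new tests. Purbhoo's original argument is geometric (degenerations of intersections of Schubert varieties governed by root-subgroup actions), not a reduction to any Newton-polytope or linear-programming statement; the whole point of Section~\ref{sec:5} is that the Schubitope criteria and Purbhoo's criterion are logically incomparable.
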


This test is quite handy. However, the number of upper order filters for type $A_{n-1}$ is the Catalan number $C_{n}=\frac{1}{n+1}{2n\choose n}$, which is exponential in $n$. 

\begin{example}
The vanishing of $(1423,1423,1342)$ is seen by 
Fact~\ref{fact:purbhoo}. This is doomed:
\[\tableau{ \ & \ & \ \\
 & {\color{blue} \bullet}{\color{red} \bullet} & {\color{blue} \bullet}{\color{red} \bullet}\bullet \\
 && \bullet  }
\]
As is explained in Example~\ref{baddc}, our methods are inconclusive here.\qed
\end{example}

\begin{example}
Let $u=v=3216547$ and $w=1652473$. Below we mark the inversions of $u,v,w$ with ${\color{blue} \bullet},{\color{red} \bullet}, \bullet$ respectively.
\[\tableau{ {\color{blue} \bullet}{\color{red} \bullet}& {\color{blue} \bullet} {\color{red} \bullet} & \ & \ & \  & \ \\
 & {\color{blue} \bullet}{\color{red} \bullet} \bullet &\bullet & \bullet & \ & \bullet  \\
 && \bullet & \bullet &\  &\bullet  \\
 &&& {\color{blue} \bullet}{\color{red} \bullet} &  {\color{blue} \bullet}{\color{red} \bullet} &\ \\
 &&&& {\color{blue} \bullet}{\color{red} \bullet} & \bullet \\
 &&&&& \bullet }
\]
This game is not doomed, so Fact~\ref{fact:purbhoo} is inconclusive here.
(Descent cycling doesn't help either, as the equivalence class of size $9$ contains no dc trivial elements.)
Also, Theorem~\ref{thm:main} does not succeed.
However, Theorem~\ref{thm:variation}'s test shows $C_{u,v}^{w_0w}=C_{3216547,3216547}^{7236415}=0$. \qed
\end{example}

\section*{Acknowledgements}
We thank Husnain Raza for writing code, as part of the Illinois Combinatorics Lab for Undergraduate Experience (ICLUE) program, to help study Allen Knutson's descent cycling. We thank our collective coauthors Anshul Adve, Alex Fink, Karola M\'esz\'aros, Cara Monical, Neriman Tokcan, and Colleen Robichaux from \cite{Fink.Meszaros.StDizier, MTY, ARY} for their work upon which is paper is possible.
AS was supported by an NSF postdoctoral fellowship. AY was partially supported by an Simons Collaboration Grant, an NSF RTG 1937241 in Combinatorics, and an appointment at the UIUC Center for Advanced Study.

\end{document}